\newcommand\R{{\mathbf{R}}}
\newcommand\C{{\mathbf{C}}}
\newcommand\E{{\mathrm{E}}}
\newcommand\M{{\operatorname{M}}}
\newcommand\margin{{\operatorname{margin}}}
\newcommand\supp{{\operatorname{supp}}}
\newcommand\eps{{\varepsilon}}
\newcommand\dist{{\operatorname{dist}}}
\newcommand\red{{\operatorname{red}}}
\newcommand\blue{{\operatorname{blue}}}
\theoremstyle{plain}
  \newtheorem{theorem}[subsection]{Theorem}
  \newtheorem{proposition}[subsection]{Proposition}
  \newtheorem{lemma}[subsection]{Lemma}
  \newtheorem{corollary}[subsection]{Corollary}
\theoremstyle{remark}
  \newtheorem{remark}[subsection]{Remark}
\theoremstyle{definition}
  \newtheorem{definition}[subsection]{Definition}
\begin{document}

\title[Inverse theorem for bilinear wave]{An inverse theorem for the bilinear $L^2$ Strichartz estimate for the wave equation}
\author{Terence Tao}
\address{Department of Mathematics, UCLA, Los Angeles CA 90095-1555}
\email{tao@math.ucla.edu}
\subjclass{35L05}

\vspace{-0.3in}
\begin{abstract} A standard bilinear $L^2$ Strichartz estimate for the wave equation, which underlies the theory of $X^{s,b}$ spaces of Bourgain and Klainerman-Machedon, asserts (roughly speaking) that if two finite-energy solutions to the wave equation are supported in transverse regions of the light cone in frequency space, then their product lies in spacetime $L^2$ with a quantitative bound.  In this paper we consider the \emph{inverse problem} for this estimate: if the product of two waves has large $L^2$ norm, what does this tell us about the waves themselves?  The main result, roughly speaking, is that the lower-frequency wave is dispersed away from a bounded number of light rays.  This result will be used in a forthcoming paper \cite{tao:heatwave4} of the author on the global regularity problem for wave maps.
\end{abstract}

\maketitle

\section{Introduction}

\subsection{Motivation}

Fix a dimension $n \geq 2$.  A \emph{wave} is defined to be a (tempered distributional) solution $\phi: \R \times \R^n \to \C$ to the wave equation
$$ - \phi_{tt} + \Delta \phi = 0,$$
thus the spacetime Fourier transform
$$ \tilde \phi(\tau,\xi) := \int_\R \int_{\R^d} \phi(t,x) e^{-2\pi i (t \tau + x \cdot \xi)}\ dx dt$$
of a wave (where the integrals should be interpreted in a distributional or limiting sense) is a measure 
$$ \tilde \phi(\tau,\xi) = f_+(\xi) \delta(\tau - |\xi|) + f_-(\xi) \delta(\tau + |\xi|)$$
supported on the light cone
$$ \Sigma := \{ (\tau,\xi) \in \R \times \R^d: |\tau| = |\xi| \},$$
where $\delta$ is the Dirac distribution and $f_+, f_-$ are measurable functions.  We will restrict attention to waves whose \emph{mass}
$$ \M( \phi ) := \int_{\R^d} |f_+(\xi)|^2 + |f_-(\xi)|^2\ d\xi$$ 
is finite.  From Plancherel's theorem we observe that
\begin{equation}\label{massif}
 \| \phi \|_{L^\infty_t L^2_x(\R \times \R^n)} \lesssim \M(\phi)^{1/2}
 \end{equation}
for all waves; here and in the sequel, we use $X \lesssim Y$ to denote the estimate $X \leq C_n Y$ for some constant $C_n$ depending on the dimension $n$ that varies from line to line, and $L^p_t L^q_x$ denotes the usual spacetime mixed Lebesgue norms.

Let $e_1,\ldots,e_n$ be the standard basis of $\R^d$.  We identify some special classes of waves:
\begin{itemize}
\item A wave $\phi$ has \emph{frequency $2^k$} if $\tilde \phi$ is supported on the conic annulus $\Sigma_k := \{ (\tau,\xi) \in \Sigma: 2^k \leq |\xi| \leq 2^{k+1} \}$.
\item A wave $\phi$ is \emph{red} if it is supported in the set $\Sigma_\red := \{ ( |\xi|, \xi ): \xi \in \R^d, \angle(\xi,e_1) \leq \pi/8 \}$.
\item A wave $\phi$ is \emph{blue} if it is supported in the set $\Sigma_\blue := \{ ( -|\xi|, \xi ): \xi \in \R^d, \angle(\xi,e_1) \leq \pi/8 \}$.
\end{itemize}

\begin{remark} If a wave $\phi$ has frequency $2^k$, then the conserved energy
$$ \E(\phi) = \frac{1}{2} \int_{\R^n} |\phi_t(t,x)|^2 + |\nabla \phi(t,x)|^2\ dx$$
is comparable to $2^k \M(\phi)$.  However, we will not use the energy in this paper.
\end{remark}

In the theory of nonlinear wave equations, it is of interest to estimate the product of two waves, taking advantage of any transversality of the frequency supports of such waves.  A model problem is that of estimating products of red waves $\phi$ and blue waves $\psi$ (the general case can be obtained from this special case by dyadic decomposition in frequency and angle, Lorentz transforms and conjugation, see e.g. \cite{tv:cone2}, \cite{tao:cone}, \cite{lv}, \cite{lrv}).  We may normalise $\phi$ to have frequency $1$ and $\psi$ to have frequency $2^k$ for some $k \geq 0$, thus the blue wave will have the higher frequency.

We have the following fundamental bilinear Strichartz estimate (see e.g. \cite{borg:cone}, \cite{kl-mac:null}, \cite{tv:cone1}, \cite{mock:cone}, \cite{damiano:null}):

\begin{proposition}[Bilinear $L^2$ Strichartz estimate]\label{basl2}  Let $\phi$ be a red wave of frequency $1$, and $\psi$ be a blue wave of frequency $2^k$ for some $k \geq 0$. Then we have
\begin{equation}\label{l2-noloc}
\| \phi \psi \|_{L^2_{t,x}(\R \times \R^n)} \lesssim \M(\phi)^{1/2} \M(\psi)^{1/2}.
\end{equation}
\end{proposition}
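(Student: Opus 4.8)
The plan is to transfer the estimate to the spacetime Fourier side and exploit the angular transversality of the red and blue regions by way of the co-area formula. Writing $\tilde\phi(\tau,\xi)=f(\xi)\delta(\tau-|\xi|)$ and $\tilde\psi(\tau,\xi)=g(\xi)\delta(\tau+|\xi|)$ — so that $f$ is supported where $1\le|\xi|\le2$ and $\angle(\xi,e_1)\le\pi/8$, $g$ is supported where $2^k\le|\xi|\le2^{k+1}$ and $\angle(\xi,e_1)\le\pi/8$, and $\M(\phi)=\|f\|_{L^2}^2$, $\M(\psi)=\|g\|_{L^2}^2$ — the identity $\widetilde{\phi\psi}=\tilde\phi*\tilde\psi$ together with Plancherel's theorem reduces \eqref{l2-noloc}, after a routine truncation of $f$ and $g$ to smooth compactly supported functions that is removed at the end, to the bound $\|F\|_{L^2_{\tau,\zeta}(\R\times\R^n)}\lesssim\|f\|_{L^2}\|g\|_{L^2}$, where
\begin{equation}\label{eq:convbound}
F(\tau,\zeta):=\int_{\R^n}f(\xi)\,g(\zeta-\xi)\,\delta\bigl(\tau-\Psi_\zeta(\xi)\bigr)\,d\xi,\qquad \Psi_\zeta(\xi):=|\xi|-|\zeta-\xi|.
\end{equation}
The structural feature I would lean on is that on the support of the integrand both $\xi$ and $\zeta-\xi$ make an angle at most $\pi/8$ with $e_1$, hence at most $\pi/4$ with one another.

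First I would estimate $F$ fibrewise in $(\tau,\zeta)$. By the co-area formula the integral in \eqref{eq:convbound} equals $\int_{\Gamma_{\tau,\zeta}}f(\xi)\,g(\zeta-\xi)\,|\nabla\Psi_\zeta(\xi)|^{-1}\,d\mathcal{H}^{n-1}(\xi)$ over the hypersurface $\Gamma_{\tau,\zeta}:=\{\xi:\Psi_\zeta(\xi)=\tau\}$, with $\nabla\Psi_\zeta(\xi)=\widehat\xi+\widehat{\zeta-\xi}$ where $\widehat v:=v/|v|$. Since $f(\xi)g(\zeta-\xi)$ vanishes unless $\xi\in\supp f$ and $\zeta-\xi\in\supp g$, I may insert those indicators and apply Cauchy--Schwarz on $\Gamma_{\tau,\zeta}$ to get
$$|F(\tau,\zeta)|^2\ \le\ \Bigl(\int_{\Gamma_{\tau,\zeta}}\frac{\mathbf{1}_{\supp f}(\xi)\,\mathbf{1}_{\supp g}(\zeta-\xi)}{|\nabla\Psi_\zeta(\xi)|^{2}}\,d\mathcal{H}^{n-1}(\xi)\Bigr)\Bigl(\int_{\Gamma_{\tau,\zeta}}|f(\xi)|^{2}\,|g(\zeta-\xi)|^{2}\,d\mathcal{H}^{n-1}(\xi)\Bigr).$$
Granting (see below) that the first factor is $\lesssim1$ uniformly in $(\tau,\zeta)$, I would integrate in $\tau$ and apply the co-area formula in the reverse direction — now using only the trivial bound $|\nabla\Psi_\zeta|\le2$ — to obtain $\int_{\R}\int_{\Gamma_{\tau,\zeta}}|f(\xi)|^{2}|g(\zeta-\xi)|^{2}\,d\mathcal{H}^{n-1}\,d\tau\lesssim\int_{\R^n}|f(\xi)|^{2}|g(\zeta-\xi)|^{2}\,d\xi$, and then integrate in $\zeta$ and use Fubini to land on $\|f\|_{L^2}^{2}\|g\|_{L^2}^{2}$.

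The main obstacle — the one place transversality really enters — is the pointwise estimate
$$\int_{\Gamma_{\tau,\zeta}}\frac{\mathbf{1}_{\supp f}(\xi)\,\mathbf{1}_{\supp g}(\zeta-\xi)}{|\nabla\Psi_\zeta(\xi)|^{2}}\,d\mathcal{H}^{n-1}(\xi)\ \lesssim\ 1.$$
On the relevant region $\angle(\xi,\zeta-\xi)\le\pi/4$, so $|\nabla\Psi_\zeta|^{2}=|\widehat\xi+\widehat{\zeta-\xi}|^{2}=2+2\,\widehat\xi\cdot\widehat{\zeta-\xi}\ge2+\sqrt2$, which disposes of the denominator; it then remains to bound the $\mathcal{H}^{n-1}$-measure of the portion of $\Gamma_{\tau,\zeta}$ on which the numerator is nonzero. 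Here I would use that $\widehat\xi+\widehat{\zeta-\xi}$, being a sum of two vectors in the convex cone of directions within $\pi/8$ of $e_1$, again makes an angle at most $\pi/8$ with $e_1$; hence $\partial_{\xi_1}\Psi_\zeta=(\widehat\xi+\widehat{\zeta-\xi})\cdot e_1\gtrsim1$ there, so together with $|\nabla\Psi_\zeta|\le2$ the surface $\Gamma_{\tau,\zeta}$ is locally a Lipschitz graph $\xi_1=h(\xi_2,\dots,\xi_n)$ over a subset of the projection of $\{|\xi|\le2\}$, whose area is $\lesssim1$. This transversality/Jacobian input is exactly where the $\pi/8$ separation of the red and blue sectors is used; everything else is Plancherel and two applications of the co-area formula. (The hypothesis $k\ge0$ is not needed beyond fixing a normalisation — the argument only uses that the blue frequency stays bounded away from $0$, which holds for every $k$.)
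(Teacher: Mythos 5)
Your argument is correct. Note that the paper does not actually prove Proposition \ref{basl2}: it is quoted with citations (Bourgain, Klainerman--Machedon, Mockenhaupt, Tao--Vargas, Foschi--Klainerman), and what you have written is essentially the classical proof from that literature --- Plancherel, the co-area (or delta-measure) representation of $\tilde\phi*\tilde\psi$ on the cone, Cauchy--Schwarz in the fibre, and a transversality bound on the fibre measure, with the gradient $\nabla\Psi_\zeta=\widehat\xi+\widehat{\zeta-\xi}$ bounded below precisely because red and blue lie on opposite branches and within angle $\pi/8$ of $e_1$. One small point worth making explicit where you write ``locally a Lipschitz graph'': to convert the graph representation into the bound $\mathcal{H}^{n-1}(\Gamma_{\tau,\zeta}\cap\{\text{numerator}\neq0\})\lesssim 1$ you need bounded multiplicity over the $\xi'$-projection, not just a local graph structure. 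This is easily supplied: the constraint region $\{\xi:\angle(\xi,e_1)\le\pi/8\}\cap\{\xi:\angle(\zeta-\xi,e_1)\le\pi/8\}$ is an intersection of two convex sets, hence convex, so its intersection with each line $\{\xi'=\const\}$ is an interval, and on that interval $\partial_{\xi_1}\Psi_\zeta\ge 2\cos(\pi/8)>0$ makes $\xi_1\mapsto\Psi_\zeta$ strictly increasing; thus the level set meets each such line at most once, and the surface measure is $\lesssim$ the measure of the projection of $\supp f\subset\{|\xi|\le2\}$, which is $O(1)$. With that sentence added the proof is complete, and as you observe it in fact gives the constant uniformly in $k$ (indeed for all $k\in\Z$, since only the boundedness of the red support and the angular separation are used), which is consistent with the sharp form of the estimate in the references.
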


This estimate is a model for the bilinear estimates for $X^{s,b}$ spaces, which are of importance in nonlinear wave equations (see e.g. \cite{damiano:null} for a discussion).

In this paper we consider the \emph{inverse problem} for the above estimate: suppose that $\phi, \psi$ are as in Proposition \ref{basl2}, and we have the \emph{lower} bound
\begin{equation}\label{delta}
\| \phi \psi \|_{L^2_{t,x}(\R \times \R^n)} \geq \delta \M(\phi)^{1/2} \M(\psi)^{1/2}
\end{equation}
for some $\delta > 0$.  What can one then conclude about $\phi, \psi$?  Such inverse problems are closely related to the task of obtaining good \emph{profile decompositions} for $\phi, \psi$, which are in turn useful for more refined applications to nonlinear wave equations, see e.g. \cite{kv} for a discussion.  Our primary reason for pursuing this question is that it will have direct application to the global regularity problem for wave maps, and specifically to the large energy perturbation theory of such maps; see \cite{tao:heatwave4}.

\subsection{The equal-frequency case}

To motivate the main results, let us first consider the simpler \emph{equal frequency case} $k=0$.  In this case, we have the following $L^p$ estimate:

\begin{proposition}[Bilinear $L^p$ Strichartz estimate, equal-frequency case]\label{bilp}\cite{wolff:cone}, \cite{tao:cone}  Let $\phi$ be a red wave of frequency $1$, and $\psi$ be a blue wave of frequency $1$.  Then we have
\begin{equation}\label{lp-noloc}
\| \phi \psi \|_{L^p_{t,x}(\R \times \R^n)} \lesssim \M(\phi)^{1/2} \M(\psi)^{1/2}
\end{equation}
for all $p \geq \frac{n+3}{n+1}$.
\end{proposition}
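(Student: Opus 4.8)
The plan is to prove this by the Wolff--Tao route to the bilinear adjoint restriction theorem for the cone: peel off the easy range $p \ge 2$, reduce to a single endpoint estimate, and establish that by induction on scales. First I would dispose of $p \ge 2$, which is soft: the Fourier supports of a frequency-$1$ red wave and a frequency-$1$ blue wave lie in sets of $\xi$-volume $O(1)$, so Cauchy--Schwarz gives $\|\phi\|_{L^\infty_{t,x}} \lesssim \M(\phi)^{1/2}$ and $\|\psi\|_{L^\infty_{t,x}} \lesssim \M(\psi)^{1/2}$, hence $\|\phi\psi\|_{L^\infty_{t,x}} \lesssim \M(\phi)^{1/2}\M(\psi)^{1/2}$; interpolating this bilinearly in $(\phi,\psi)$ with the $L^2$ bound of Proposition \ref{basl2} yields \eqref{lp-noloc} for all $2 \le p \le \infty$. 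It remains to treat $\frac{n+3}{n+1} \le p < 2$, and since a further bilinear interpolation with Proposition \ref{basl2} recovers all of $[\frac{n+3}{n+1},2]$ from its endpoints, it suffices to prove the single estimate at $p = \frac{n+3}{n+1}$. Replacing $\psi$ by $\overline{\psi}$ and changing variables $\xi \mapsto -\xi$ leaves $|\phi\psi|$ unchanged but turns $\psi$ into a \emph{red} wave whose spatial frequencies lie within angle $\pi/8$ of $-e_1$, so we are reduced to bounding $\|\phi\chi\|_{L^{(n+3)/(n+1)}_{t,x}}$ where $\phi,\chi$ are frequency-$1$ waves on the \emph{forward} cone supported in two caps separated by a fixed (indeed nearly antipodal) angle. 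Dually this asserts that, for $f,g \in L^2$ supported on two such caps, $\| (f\,d\sigma_1)^\vee \, (g\,d\sigma_2)^\vee \|_{L^{(n+3)/(n+1)}(\R \times \R^n)} \lesssim \|f\|_{L^2}\|g\|_{L^2}$, the exponent $\frac{n+3}{n+1}$ being exactly the one forced by parabolic scaling together with the Knapp example.

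For the endpoint estimate I would argue by induction on scales. Fix a large radius $R$; after a harmless Fourier truncation it is enough to bound the contribution of a spacetime ball $B_R$. On $B_R$ expand $\phi = \sum_T \phi_T$ and $\chi = \sum_{T'} \chi_{T'}$ into wave packets: $T$ runs over a collection of $R^{1/2} \times \cdots \times R^{1/2} \times R$ slabs, one for each $R^{-1/2}$-cap of the forward cone inside the support of $\phi$ and pointing along the corresponding null direction, with $\sum_T \|\phi_T\|_{L^\infty_t L^2_x}^2 \lesssim \M(\phi)$ and $\phi_T$ concentrated on (a dilate of) $T$ with rapidly decaying tails; likewise for $\chi$. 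Writing $\phi\chi = \sum_{T,T'} \phi_T \chi_{T'}$, group the pairs by the dyadic size $\theta \sim 2^{-j}$ of the angle between the null directions of $T$ and $T'$. For the \emph{transverse} pairs ($\theta \gtrsim 1$) the slabs $T$ and $T'$ meet in boxes of bounded eccentricity, only boundedly many such intersections pass through any given point, and an $L^2$-orthogonality (C\'ordoba-type square function) argument --- essentially Proposition \ref{basl2} applied at scale $R$ --- gives the $L^p$ bound for this part with an absolute constant. For the \emph{near-parallel} pairs ($\theta \sim 2^{-j}$ small) all the relevant $R^{-1/2}$-caps lie in a single sector of angular width $\sim 2^{-j}$, which a parabolic rescaling converts into a full pair of transverse caps at the smaller scale $\sim R\,2^{-2j}$; applying the inductive hypothesis there and summing over $j$ closes the induction.

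The main obstacle is carrying out this last summation \emph{without loss} at the endpoint. Done crudely --- as in Wolff's original argument --- the induction on scales costs a factor $C_\eps R^\eps$, which is fine for every $p > \frac{n+3}{n+1}$ but fatal at $p = \frac{n+3}{n+1}$. Removing the loss is the heart of \cite{tao:cone} and requires (i) a \emph{two-ends}, or non-concentration, reduction, allowing one to assume the wave packets of $\phi$ and of $\chi$ are not abnormally clustered, and (ii) a careful reorganization of the induction that recoups a power of $\log R$ at each stage, so the accumulated constants form a convergent rather than a divergent product. Underlying all of this is a Nikodym-type incidence bound for families of $R^{1/2} \times R$ ``plates'' which is strictly stronger than the analogous bound for $1 \times R$ tubes; this improvement, available precisely because the two caps are transverse, is what produces the gain beyond the $L^2$ estimate and pins down the sharp exponent $\frac{n+3}{n+1}$.
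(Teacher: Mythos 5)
Your preliminary reductions are fine: the $L^\infty$ bound from Cauchy--Schwarz on the bounded Fourier support, log-convexity of the $L^p$ norms to handle $p\geq 2$ and to reduce the range $[\frac{n+3}{n+1},2]$ to the single endpoint, and the conjugation/reflection turning the blue wave into a forward-cone wave with frequencies near $-e_1$ are all correct and standard. Note also that the paper does not prove Proposition \ref{bilp} at all --- it is quoted from Wolff \cite{wolff:cone} and the endpoint paper \cite{tao:cone} --- so the only question is whether your sketch faithfully reproduces the structure of those arguments. It does not, and the discrepancy is a genuine gap rather than a stylistic difference.

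After your reduction, the two caps are separated by an angle $\gtrsim 1$, so \emph{every} wave packet of $\phi$ is transverse to \emph{every} wave packet of $\chi$: your Whitney decomposition into dyadic angles $\theta\sim 2^{-j}$ contains only the $\theta\sim 1$ class, the ``near-parallel pairs'' class is empty, and the parabolic-rescaling induction you propose has nothing to induct on. That angular-decomposition-plus-rescaling scheme is the device for deducing estimates \emph{without} angular separation (or linear restriction/cone multiplier estimates) from the transverse bilinear estimate, as alluded to in the introduction of this paper; it is not a proof of the transverse estimate itself. Meanwhile the entire content of the theorem sits in the transverse-pairs case, which you dispatch with a C\'ordoba-type $L^2$-orthogonality argument ``with an absolute constant''; such orthogonality arguments yield only the $p=2$ bound, i.e.\ Proposition \ref{basl2}, and cannot by themselves reach any $p<2$ --- as your own final paragraph implicitly concedes when it locates the gain beyond $L^2$ in a plate-incidence bound. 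The actual engine in Wolff's and Tao's proofs is an induction on the \emph{spatial} scale $R$: a two-ends/non-concentration dichotomy for wave packets relative to smaller balls (already essential in Wolff, not only in the endpoint refinement), Kakeya/Nikodym-type incidence estimates for the transverse families of $R^{1/2}\times\cdots\times R^{1/2}\times R$ tubes in the non-concentrated case, and, for the endpoint $p=\frac{n+3}{n+1}$, the margin/Bessel-type decomposition into half-size subcubes that this paper records as Proposition \ref{decomp}. As written, your inductive structure is vacuous for the reduced problem and the step carrying all of the difficulty is asserted rather than proved; to make this correct you would have to replace the angular induction by the spatial-scale induction of \cite{wolff:cone}, \cite{tao:cone} (or simply cite those papers, as the text does).
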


This estimate, first conjectured by Machedon and Klainerman, was established for $p > \frac{n+3}{n+1}$ by Wolff\cite{wolff:cone} (with a constant depending on $p$), with the endpoint $p=\frac{n+3}{n+1}$ being established subsequently by the author in \cite{tao:cone}.  Earlier partial results in this direction (in the important two-dimensional case $n=2$, and with $\frac{n+3}{n+1}$ being replaced by an exponent $2-c$ for some $c>0$) were obtained by Bourgain \cite{borg:cone} and subsequently by Vargas and the author \cite{tv:cone1}.  The exponent $\frac{n+3}{n+1}$ is best possible (see e.g. \cite{tv:cone1} for a counterexample beyond this exponent).  For the purposes of this paper, though, one could replace $\frac{n+3}{n+1}$ by any other exponent strictly less than $2$.

By combining this proposition with H\"older's inequality (interpolating $L^2_{t,x}$ between $L^p_{t,x}$ and $L^\infty_{t,x}$ for some $\frac{n+3}{n+1} \leq p < 2$, and using Bernstein's inequality to bound $\phi,\psi$ in $L^\infty$), we obtain

\begin{corollary}[Concentration at a point, equal frequency case]\label{concpt}  Let $\phi$ be a red wave of frequency $1$, and $\psi$ be a blue wave of frequency $1$, with the normalisation $\M(\phi) = \M(\psi) = 1$.  Suppose that \eqref{delta} holds for some $0 < \delta \lesssim 1$.  Then there exists a point $(t_0,x_0)$ in spacetime such that $|\phi(t_0,x_0)|, |\psi(t_0,x_0)| \gtrsim \delta^{O(1)}$, where we use $O(1)$ to denote a quantity which is $\lesssim 1$.
\end{corollary}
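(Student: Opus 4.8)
The plan is to interpolate the $L^2$ lower bound against an $L^p$ upper bound for some $\frac{n+3}{n+1} \le p < 2$, and then feed the resulting $L^q$ lower bound for $q > 2$ into Bernstein. Concretely, from \eqref{delta} with $\M(\phi) = \M(\psi) = 1$ we have $\|\phi\psi\|_{L^2_{t,x}} \ge \delta$; from Proposition \ref{bilp} we have $\|\phi\psi\|_{L^p_{t,x}} \lesssim 1$. By Hölder's inequality in the form $\|\phi\psi\|_{L^2_{t,x}} \le \|\phi\psi\|_{L^p_{t,x}}^{\theta} \|\phi\psi\|_{L^\infty_{t,x}}^{1-\theta}$ for the appropriate $\theta \in (0,1)$ with $\frac{1}{2} = \frac{\theta}{p} + \frac{1-\theta}{\infty}$ — i.e. $\theta = p/2$ — we obtain $\|\phi\psi\|_{L^\infty_{t,x}} \gtrsim \delta^{2/(2-\theta)} = \delta^{O(1)}$; here I am using that $p$ is a fixed exponent (say $p = \frac{n+3}{n+1}$), so $\theta$ is a fixed constant in $(0,1)$ and all the exponents produced are $O(1)$. (One subtlety: $\phi\psi$ need not a priori be bounded, but $\phi$ and $\psi$ separately are, by the next step, so the product is, and this Hölder manipulation is justified.)

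Next I would choose a spacetime point $(t_0, x_0)$ at which $|\phi(t_0,x_0)\psi(t_0,x_0)| \gtrsim \delta^{O(1)}$; such a point exists (up to shrinking the implied constant) since the $L^\infty_{t,x}$ norm is essentially attained — or more carefully, there is a point where $|\phi\psi| \ge \frac12 \|\phi\psi\|_{L^\infty_{t,x}}$, using that $\phi,\psi$ are continuous (indeed smooth away from issues at the origin in frequency, but frequency-$1$ localisation makes them Schwartz-like in $x$ and bounded), which again follows from the next step. Then trivially $|\phi(t_0,x_0)| \gtrsim \delta^{O(1)}$ and $|\psi(t_0,x_0)| \gtrsim \delta^{O(1)}$, since each factor is $\lesssim 1$ in $L^\infty$: indeed, since $\phi$ has frequency $1$, $\phi(t) = \int_{\R^d} f_+(\xi) e^{2\pi i(t|\xi| + x\cdot\xi)}\,d\xi + (\text{$f_-$ term})$ with $f_\pm$ supported on $|\xi| \sim 1$, so by Cauchy–Schwarz (Bernstein's inequality) $\|\phi\|_{L^\infty_{t,x}} \lesssim \|f_+\|_{L^2} + \|f_-\|_{L^2} \lesssim \M(\phi)^{1/2} = 1$, and similarly for $\psi$ (whose frequency is $2^0 = 1$ in this equal-frequency case). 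Thus a lower bound on the product of $\delta^{O(1)}$ together with an upper bound of $O(1)$ on each factor forces each factor to be $\gtrsim \delta^{O(1)}$ at that point.

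The main obstacle — really the only non-bookkeeping point — is the interpolation/Hölder step: one must ensure $\phi\psi \in L^\infty_{t,x}$ so that the inequality $\|\phi\psi\|_{L^2} \le \|\phi\psi\|_{L^p}^{\theta}\|\phi\psi\|_{L^\infty}^{1-\theta}$ is meaningful and that $\|\phi\psi\|_{L^\infty}$ is finite and (approximately) attained at a point. This is handled by the Bernstein bound above, which gives $\phi, \psi \in L^\infty_{t,x}$ with $O(1)$ norms (hence $\phi\psi \in L^\infty_{t,x}$), and by the continuity of $\phi,\psi$, which lets us pass from the essential supremum to an honest pointwise value at some $(t_0,x_0)$. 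Everything else is a direct computation with fixed exponents, so all constants and powers of $\delta$ that appear are $O(1)$ as claimed.
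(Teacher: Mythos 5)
Your argument is correct and is exactly the paper's intended proof: the corollary is obtained by interpolating $L^2_{t,x}$ between the bilinear $L^p$ bound of Proposition \ref{bilp} and $L^\infty_{t,x}$ via H\"older, and then using Bernstein's inequality to bound $\phi,\psi$ individually in $L^\infty$ so that a large value of $|\phi\psi|$ at a point forces both factors to be $\gtrsim \delta^{O(1)}$ there. The only blemish is the bookkeeping of the exponent (with $\theta=p/2$ the H\"older step gives $\|\phi\psi\|_{L^\infty}\gtrsim \delta^{1/(1-\theta)}=\delta^{2/(2-p)}$ rather than $\delta^{2/(2-\theta)}$), which is harmless since either way the power is $O(1)$.
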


From Bernstein's inequality we also know that $|\nabla_{t,x} \phi|, |\nabla_{t,x} \psi| \lesssim 1$ under the hypotheses in Corollary \ref{concpt}, which implies that the concentration bound $|\phi(t,x)|, |\psi(t,x)| \gtrsim \delta^{O(1)}$ occurs not only at a single point $(t_0,x_0)$, but in fact on a spacetime cube $Q$ of side-length $\gtrsim \delta^{O(1)}$ centred around that point.  On the other hand, Strichartz estimates such as
$$
\| \phi \|_{L^{\frac{2(n+1)}{n-1}}_{t,x}(\R \times \R^n)} \lesssim \M(\phi)$$
(see \cite{strichartz:restrictionquadratic}) tell us that the number of disjoint such cubes is at most $\delta^{-O(1)}$.  Pursuing this idea soon leads to

\begin{corollary}[Profile decomposition, equal frequency case]\label{prof}  Let $\phi$ be a red wave of frequency $1$, and let $0 \leq \delta \lesssim 1$.  Then there exists a family ${\mathcal Q}$ of spacetime cubes $Q$ of size $\gtrsim \delta^{O(1)}$ and cardinality $\lesssim \delta^{-O(1)}$ such that
$$
\| \phi \psi \|_{L^2_{t,x}(\R \times \R^n \backslash \bigcup_{Q \in {\mathcal Q}} Q )} \lesssim \delta \M(\phi)^{1/2} \M(\psi)^{1/2}
$$
for all blue waves $\psi$ of frequency $1$.
\end{corollary}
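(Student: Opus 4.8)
The plan is to construct ${\mathcal Q}$ from $\phi$ alone --- with no reference to $\psi$ --- as a bounded cover of a single superlevel set $\{|\phi|\geq\eta\}$ of $\phi$, where the threshold $\eta\approx\delta^{O(1)}$ is calibrated to meet two competing demands: it must be large enough that $\{|\phi|\geq\eta\}$ can be covered by $\lesssim\delta^{-O(1)}$ cubes of size $\gtrsim\delta^{O(1)}$, yet small enough that $\phi\psi$ has negligibly small $L^2$ norm once the cover is removed. By scaling we may normalise $\M(\phi)=1$, and since ${\mathcal Q}$ will not depend on $\psi$ it suffices to prove the asserted estimate for each blue wave $\psi$ of frequency $1$ with $\M(\psi)=1$; we may also assume $\delta>0$, since for $\delta=0$ one can simply take ${\mathcal Q}$ to be a tiling of spacetime into unit cubes. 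Recall that $\phi$, having frequency $1$ and finite mass, is bounded and continuous, with $\|\phi\|_{L^\infty_{t,x}}\lesssim 1$ and $\|\nabla_{t,x}\phi\|_{L^\infty_{t,x}}\lesssim 1$ by Bernstein's inequality (as in the discussion after Corollary \ref{concpt}), and likewise $\|\psi\|_{L^\infty_{t,x}}\lesssim 1$. I would set $p:=\frac{n+3}{n+1}<2$, so that $2-p=\frac{n-1}{n+1}$, and $\eta:=c\,\delta^{2(n+1)/(n-1)}$ for a small dimensional constant $c=c(n)>0$ to be pinned down at the end.

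\emph{Construction of the cover.} From the gradient bound, if $|\phi(t_0,x_0)|\geq\eta$ then $|\phi|\geq\eta/2$ on the spacetime cube of side $r_0:=c'\eta$ centred at $(t_0,x_0)$, for a suitable small $c'=c'(n)>0$. I would then choose a maximal $r_0$-separated (in the $\ell^\infty$ metric) subset $\{(t_j,x_j)\}_j$ of the closed set $\{|\phi|\geq\eta\}$ and let ${\mathcal Q}$ be the family of cubes $Q_j$ of side $3r_0$ centred at the $(t_j,x_j)$. By maximality every point of $\{|\phi|\geq\eta\}$ lies within $\ell^\infty$-distance $r_0$ of some $(t_j,x_j)$, hence inside $Q_j$, so $\{|\phi|\geq\eta\}\subseteq\bigcup_{Q\in{\mathcal Q}}Q$. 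The cubes of side $r_0$ centred at the $(t_j,x_j)$ have pairwise disjoint interiors and each carries $|\phi|\geq\eta/2$, so the linear Strichartz estimate $\|\phi\|_{L^{2(n+1)/(n-1)}_{t,x}}\lesssim 1$ yields
$$ |{\mathcal Q}|\,(\eta/2)^{2(n+1)/(n-1)}\,r_0^{n+1} \leq \sum_j\int_{Q_j}|\phi|^{2(n+1)/(n-1)} \lesssim 1. $$
Since $\eta$ and $r_0$ are both $\approx\delta^{O(1)}$, this forces $|{\mathcal Q}|\lesssim\delta^{-O(1)}$, while visibly every cube of ${\mathcal Q}$ has side $3r_0\gtrsim\delta^{O(1)}$.

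\emph{Smallness off the cover.} Let $\psi$ be a blue wave of frequency $1$ with $\M(\psi)=1$, and set $\Omega:=\R\times\R^n\setminus\bigcup_{Q\in{\mathcal Q}}Q$. By construction $|\phi|<\eta$ on $\Omega$, hence $|\phi\psi|\leq\eta\|\psi\|_{L^\infty_{t,x}}\lesssim\eta$ on $\Omega$. Since $p<2$, the elementary bound $|\phi\psi|^2\leq(\sup_\Omega|\phi\psi|)^{2-p}\,|\phi\psi|^p$ on $\Omega$, combined with the bilinear $L^p$ estimate of Proposition \ref{bilp}, gives
$$ \int_\Omega|\phi\psi|^2 \leq \Bigl(\sup_\Omega|\phi\psi|\Bigr)^{2-p}\int_\Omega|\phi\psi|^p \lesssim \eta^{\,2-p}\,\|\phi\psi\|_{L^p_{t,x}}^p \lesssim \eta^{\,2-p}. $$
Inserting $\eta=c\,\delta^{2(n+1)/(n-1)}$ and using the identity $\frac{2(n+1)}{n-1}(2-p)=2$ (which holds because $2-p=\frac{n-1}{n+1}$), the right-hand side is $\lesssim c^{2-p}\,\delta^2$. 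Choosing $c=c(n)$ small enough that the implied constant times $c^{2-p}$ is at most $1$, I obtain $\|\phi\psi\|_{L^2(\Omega)}\leq\delta$; undoing the normalisation gives $\|\phi\psi\|_{L^2(\Omega)}\lesssim\delta\,\M(\phi)^{1/2}\M(\psi)^{1/2}$ for all blue $\psi$, which is the claim --- the bound holding uniformly in $\psi$ precisely because ${\mathcal Q}$ was built from $\phi$ alone.

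There is no deep obstacle here: the one point requiring real care is the calibration of the threshold $\eta$. Step 1 would prefer $\eta$ as large as possible (fewer and larger cubes) and Step 2 would prefer it as small as possible (a more favourable $L^p$-to-$L^2$ trade-off), and the fact that a single power $\delta^{2(n+1)/(n-1)}$ simultaneously produces a $\delta^{-O(1)}$ cube count and a clean gain of $\delta$ in Step 2 is exactly the arithmetic coincidence $\frac{2(n+1)}{n-1}\bigl(2-\frac{n+3}{n+1}\bigr)=2$. Any exponent $p<2$ for which Proposition \ref{bilp} is available would serve just as well, changing only the numerical values of the $O(1)$ powers of $\delta$. (One could alternatively dispense with fixing $\eta$ in advance and argue by a stopping time, repeatedly invoking Corollary \ref{concpt} to extract a new concentration cube on which $|\phi|\gtrsim\delta^{O(1)}$ and which avoids the cubes already chosen, the process halting after $\lesssim\delta^{-O(1)}$ steps by the same Strichartz count.)
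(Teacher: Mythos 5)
Your proof is correct, and it is essentially the route the paper has in mind when it "leaves the proof to the reader": Bernstein plus the gradient bound to turn the superlevel set of $\phi$ into $\lesssim\delta^{-O(1)}$ cubes of side $\gtrsim\delta^{O(1)}$ (counted by the linear $L^{2(n+1)/(n-1)}$ Strichartz estimate), and H\"older interpolation between the bilinear $L^p$ bound of Proposition \ref{bilp} and the pointwise bound $|\phi\psi|\lesssim\eta$ off the cover, exactly as in the proof of Corollary \ref{concpt}. Building the cover from the superlevel set of $\phi$ alone is precisely what yields the required universality in $\psi$, and your exponent bookkeeping $\frac{2(n+1)}{n-1}\bigl(2-\frac{n+3}{n+1}\bigr)=2$ checks out.
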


We leave the proof of this corollary to the reader (and we will prove a more general statement below).

Informally, Corollary \ref{prof} asserts that every red wave $\phi$ has a small exceptional set outside of which one can improve the standard bilinear estimate \eqref{l2-noloc} by any specified parameter $\delta$; this exceptional set should be thought of as the set where $\phi$ is ``large''.  Note also that the exceptional set is \emph{universal} in the sense that it does not depend on the blue wave $\psi$; this universality turns out to be important for our applications to wave maps.  To get some sense of this, let us present a consequence of Corollary \ref{prof}:

\begin{corollary}[Fungibility of bilinear $L^2$ Strichartz, equal-frequency case]\label{fungi}  Let $\phi$ be a red wave of frequency $1$, and let $0 \leq \delta \lesssim 1$.  Then one can decompose $\R$ into $\lesssim \delta^{-O(1)}$ intervals $I$ (including, of course, two unbounded intervals) such that
\begin{equation}\label{fisy}
\| \phi \psi \|_{L^2_{t,x}(I \times \R^n)} \lesssim \delta \M(\phi)^{1/2} \M(\psi)^{1/2}
\end{equation}
for all such $I$ and all blue waves $\psi$ of frequency $1$.
\end{corollary}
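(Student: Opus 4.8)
The plan is to bootstrap from the profile decomposition of Corollary \ref{prof}: one splits off the time intervals where $\phi$ is ``large'', disposes of their complement directly by Corollary \ref{prof}, and disposes of the large intervals themselves by chopping them so finely that a trivial Bernstein bound suffices. Normalise $\M(\phi)=1$; we may assume that $0<\delta$ is smaller than a fixed dimensional constant, since otherwise the single interval $I=\R$ already works by \eqref{l2-noloc}.

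First I would apply Corollary \ref{prof}, with $\delta$ replaced by $\delta/C_1$ for a large dimensional constant $C_1$, obtaining a family $\mathcal{Q}$ of spacetime cubes, of cardinality $\lesssim\delta^{-O(1)}$ and each of side-length $\gtrsim\delta^{O(1)}$, such that
$$ \|\phi\psi\|_{L^2_{t,x}(\R\times\R^n\setminus\bigcup_{Q\in\mathcal{Q}}Q)} \lesssim (\delta/C_1)\,\M(\psi)^{1/2} $$
for every blue wave $\psi$ of frequency $1$. I would also arrange that every cube in $\mathcal{Q}$ has side-length $\lesssim 1$: this is automatic from the construction (the cubes arise, via Corollary \ref{concpt}, at the scale $\sim\delta^{O(1)}$ on which $\phi$ concentrates), but in any event a unit-mass frequency-$1$ wave cannot satisfy $|\phi|\gtrsim\delta^{O(1)}$ on a cube of side $\gg\delta^{-O(1)}$, and an oversized cube may be subdivided into $\lesssim\delta^{-O(1)}$ cubes of side $\sim\delta^{O(1)}$ without changing the union. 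Letting $J_Q\subset\R$ be the projection of $Q$ onto the time axis and $E:=\bigcup_{Q\in\mathcal{Q}}J_Q$, we get that $E$ is a bounded union of $\lesssim\delta^{-O(1)}$ intervals with $|E|\lesssim\delta^{-O(1)}$.

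Next I would define the decomposition of $\R$ to have as its endpoints the endpoints of the connected components of $E$ together with all points of the arithmetic progression $(\delta^2/C_2)\Z$ that lie in $E$, for a large dimensional constant $C_2$; there are $\lesssim\delta^{-O(1)}+|E|\,\delta^{-2}\lesssim\delta^{-O(1)}$ of these, so this partitions $\R$ into $\lesssim\delta^{-O(1)}$ intervals, two of them unbounded since $E$ is bounded. Each interval $I$ of the partition lies either in $\R\setminus E$ or in $E$. If $I\subset\R\setminus E$ then $I\times\R^n$ is disjoint from every $Q\in\mathcal{Q}$, so the displayed estimate gives $\|\phi\psi\|_{L^2_{t,x}(I\times\R^n)}\lesssim(\delta/C_1)\M(\psi)^{1/2}\leq\delta\,\M(\phi)^{1/2}\M(\psi)^{1/2}$ for $C_1$ large. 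If $I\subset E$ then consecutive endpoints force $|I|\leq\delta^2/C_2$, and Bernstein's inequality $\|\phi\|_{L^\infty_{t,x}}\lesssim\M(\phi)^{1/2}$ (valid for frequency-$1$ waves) together with \eqref{massif} gives
$$ \|\phi\psi\|_{L^2_{t,x}(I\times\R^n)}\leq\|\phi\|_{L^\infty_{t,x}}\,\|\psi\|_{L^2_{t,x}(I\times\R^n)}\leq\|\phi\|_{L^\infty_{t,x}}\,|I|^{1/2}\|\psi\|_{L^\infty_t L^2_x}\lesssim|I|^{1/2}\M(\phi)^{1/2}\M(\psi)^{1/2}\leq\delta\,\M(\phi)^{1/2}\M(\psi)^{1/2} $$
once $C_2$ is large enough to absorb the implied constant. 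This gives \eqref{fisy} for all intervals of the partition and all blue waves $\psi$ of frequency $1$.

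The only real input is Corollary \ref{prof} (itself proved later in a more general form); granting it, everything is H\"older and Bernstein except for the bookkeeping that keeps the number of intervals polynomial in $1/\delta$. That bookkeeping rests on the time-extent $|E|$ of the exceptional set being $\lesssim\delta^{-O(1)}$, i.e.\ on the cubes of $\mathcal{Q}$ not being unboundedly large, and I expect establishing this modest point --- by quoting the construction, or by the elementary unit-mass argument above --- to be the main (and mild) obstacle.
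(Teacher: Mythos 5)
Your proof is correct and takes essentially the same route as the paper's: Corollary \ref{prof} disposes of the intervals whose slabs avoid the exceptional cubes, while Bernstein together with \eqref{massif} disposes of the $\lesssim \delta^{-O(1)}$ short intervals covering the cubes' time projections. The additional bookkeeping you supply (bounding the cubes' side-lengths by $\delta^{-O(1)}$ so that the exceptional time set has measure $\lesssim \delta^{-O(1)}$) is a point the paper leaves implicit in the construction behind Corollary \ref{prof}, and your elementary unit-mass argument for it is fine.
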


\begin{proof}  From Bernstein's inequality and \eqref{massif} we have
$$ \| \phi \|_{L^\infty_t L^2_x(I \times \R^2)} \lesssim \M(\phi)^{1/2}; \quad \| \psi \|_{L^\infty_t L^\infty_x(I \times \R^2)} \lesssim \M(\psi)^{1/2}$$
and thus \eqref{fisy} holds whenever $|I| \leq \delta$.  To conclude the argument, we use Corollary \ref{prof} to obtain the family ${\mathcal Q}$ of spacetime cubes, and use that family to partition $\R$ into $\lesssim \delta^{-O(1)}$ ``unexceptional'' intervals $I$ whose spacetime slabs $I \times \R^n$ do not intersect any cube from ${\mathcal Q}$, plus at most $\lesssim \delta^{-O(1)}$ ``exceptional'' intervals $I$ of length at most $\delta$.
\end{proof}

Very informally, this corollary tells us that a large energy red wave behaves ``as if'' it was small energy once one localises in time, at least for the purposes of equal-frequency bilinear $L^2$ Strichartz estimates.  Furthermore the number of time intervals used in this localisation is controlled by the ratio between what one considers ``large energy'' and what one considers ``small energy''.  Such fungibility results will be useful in extending small energy perturbation theory for nonlinear wave equations to the large energy setting.  We remark in passing that it is not difficult to see that Corollary \ref{fungi} breaks down in the $n=1$ case (in which \eqref{l2-noloc} is basically an identity, and no $L^p$ improvement for $p<2$ is possible).  However, we will restrict attention here to the $n \geq 2$ case (and are in fact primarily interested in the case $n=2$).

\subsection{Main results}

For our intended applications to the global regularity problem for wave maps, it turns out that the equal-frequency inverse theory is insufficient; one must also understand the inverse theory for \eqref{l2-noloc} in the unbalanced frequency case $k \gg 1$.  Here, a partial generalisation of Proposition \ref{bilp} is known:

\begin{proposition}[Bilinear $L^p$ Strichartz estimate]\label{bilp2}\cite{tao:cone}  Let $\phi$ be a red wave of frequency $1$, and $\psi$ be a blue wave of frequency $2^k$ for some $k \geq 0$, and let $\eps > 0$.  Then we have
\begin{equation}\label{lp-noloc2}
\| \phi \psi \|_{L^p_{t,x}(\R \times \R^n)} \lesssim_\eps 2^{k(\frac{1}{p}-\frac{1}{2}+\eps)} \M(\phi)^{1/2} \M(\psi)^{1/2}
\end{equation}
for all $\frac{n+3}{n+1} \leq p \leq 2$, where the subscript in $\lesssim_\eps$ means that the implied constant is allowed to depend on $\eps$.
\end{proposition}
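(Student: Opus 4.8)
The plan is to first reduce the claim to the endpoint exponent $p = p_0 := \frac{n+3}{n+1}$, and then to deduce the endpoint estimate from the equal-frequency bilinear $L^{p_0}$ estimate of Proposition \ref{bilp} by a parabolic rescaling.

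For the reduction, observe that $\phi\psi$ is a single function, so log-convexity of the $L^p_{t,x}$ norms gives $\|\phi\psi\|_{L^p_{t,x}} \le \|\phi\psi\|_{L^{p_0}_{t,x}}^{\theta}\|\phi\psi\|_{L^2_{t,x}}^{1-\theta}$ whenever $\frac{1}{p} = \frac{\theta}{p_0} + \frac{1-\theta}{2}$ with $0 \le \theta \le 1$, i.e. for all $p_0 \le p \le 2$. Feeding in the lossless bound $\|\phi\psi\|_{L^2_{t,x}} \lesssim \M(\phi)^{1/2}\M(\psi)^{1/2}$ of Proposition \ref{basl2} and the endpoint bound $\|\phi\psi\|_{L^{p_0}_{t,x}} \lesssim_\eps 2^{k(\frac{1}{p_0}-\frac{1}{2}+\eps)}\M(\phi)^{1/2}\M(\psi)^{1/2}$ (still to be shown), and using $\theta(\frac{1}{p_0}-\frac{1}{2}) = \frac{1}{p}-\frac{1}{2}$ together with $\theta\eps\le\eps$, one recovers \eqref{lp-noloc2} for all $p_0 \le p \le 2$ (after renaming $\eps$). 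So it suffices to treat $p=p_0$.

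For the endpoint I would decompose the high-frequency wave as $\psi = \sum_\kappa \psi_\kappa$, where $\psi_\kappa$ is the restriction of $\tilde\psi$ to a cap of the backward light cone of angular radius $\sim 2^{-k/2}$ about a direction $\omega_\kappa$, chosen with bounded overlap so that $\sum_\kappa \M(\psi_\kappa) \lesssim \M(\psi)$; there are $\sim 2^{k(n-1)/2}$ of them. The scale $2^{-k/2}$ is chosen so that over each cap the light cone deviates from a hyperplane by only $O(1)$. Consequently, after rotating $\omega_\kappa$ to $e_1$, dilating by a factor $2^{-k/2}$, and applying a Lorentz boost of rapidity $\sim\frac{k}{2}\log 2$ in the $e_1$-direction --- all exact symmetries of the wave equation, so the images are again waves --- the support of $\psi_\kappa$ is carried to a cap of angular width $O(1)$ at frequency $\sim 1$ (staying within the blue sector, since $e_1$ is the fixed point of the boost), while $\phi$ is carried to a red wave of frequency $\sim 1$, now concentrated on a cap of width $\sim 2^{-k/2}$. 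Proposition \ref{bilp} applies to the transformed pair, and undoing the change of variables --- tracking the Jacobian $2^{k(n+1)/2}$ of the spacetime transformation, and the power of $2^k$ by which $\M$ gets distorted, which is pinned down by comparison with the lossless case $p=2$ of Proposition \ref{basl2} --- gives an individual bound for each $\|\phi\psi_\kappa\|_{L^{p_0}_{t,x}}$.

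The hard part is to recombine the $\sim 2^{k(n-1)/2}$ pieces $\phi\psi_\kappa$ so that the total loss is only $2^{k\eps}$. Naive recombination is far too lossy: the products $\phi\psi_\kappa$ do have essentially disjoint spacetime frequency supports, but for $p_0 < 2$ this orthogonality is not an efficient substitute for genuine cancellation (Cauchy--Schwarz over the caps already loses more than can be afforded), and applying Proposition \ref{bilp} to the very thin rescaled copies of $\phi$ discards the localization one has just gained. Making the bookkeeping close with only an $\eps$-loss is exactly what the induction-on-scales arguments of Wolff \cite{wolff:cone} and the author \cite{tao:cone} accomplish: the rescaling above is iterated, at each stage refining the cap decomposition and reducing to the bilinear estimate across a smaller frequency gap, until the geometric series of $\eps$-losses is summed. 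In practice one simply invokes \cite{tao:cone} for the endpoint bound; the point of this sketch is only to record how \eqref{lp-noloc2} for general $p$ follows from that endpoint together with Proposition \ref{basl2}.
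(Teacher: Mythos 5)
The paper offers no proof of Proposition \ref{bilp2} beyond the citation to \cite{tao:cone}, and your argument ultimately rests on that same citation, so this is essentially the same approach. Your interpolation reduction to the endpoint $p=\frac{n+3}{n+1}$ (log-convexity of $L^p$ norms together with the lossless $p=2$ bound of Proposition \ref{basl2}) is correct, and your rescaling sketch is harmless since you explicitly defer the actual endpoint bound to \cite{wolff:cone}, \cite{tao:cone} rather than claiming to prove it.
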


The power of $k$ is sharp except for the $\eps$, as we shall shortly see.  (The $\eps$ loss can probably be removed in the non-endpoint case $p > \frac{n+3}{n+1}$, although we do not pursue this matter here.)

However, it turns out that this estimate (or subsequent variants of this estimate, see e.g. \cite{lv}, \cite{lrv}) is not directly able to establish unbalanced-frequency analogues of the above inverse theory, because the critical cases no longer occur when the waves $\phi, \psi$ concentrate on cubes, but rather when they concentrate along \emph{light rays}.  To explain this phenomenon, let us describe (informally) the key example of unbalanced-frequency red-blue wave interactions. We select a direction $\omega \in S^{n-1}$ with $\angle \omega, e_1 \leq \frac{\pi}{8}$, and introduce a $1 \times 2^k$ spacetime tube
\begin{equation}\label{tube}
T_{t_0,x_0,\omega,k} := \{ (t,x): |t-t_0| \leq 2^k; |x - x_0 - \omega (t-t_0)| \leq 1 \}
\end{equation}
oriented along the null direction $(1,\omega)$.  We will refer to such sets as \emph{$1 \times 2^k$ tubes} for short.  We also consider the \emph{infinite tubes} $T_{t_0,x_0,\omega,\infty}$, defined in the obvious manner.

It is not difficult to create a blue wave $\psi$ of frequency $2^k$ and mass $\M(\psi)=1$ which is concentrated on the tube \eqref{tube}, in the sense that
$$ \int_{|x - x_0-\omega(t-t_0)| \leq 1}  |\psi(t,x)|^2\ dx \gtrsim 1$$
for all $t$ with $|t-t_0| \leq 2^k$.  The basic idea is to select $\psi$ with Fourier support on the sector $\{ (-|\xi|, \xi): 2^k \leq |\xi| \leq 2^{k+1}, \angle(\xi, \omega) \leq 2^{-k} \}$ (one could also take the smaller region $\{ (-|\xi|, \xi): 2^k \leq |\xi| \leq 2^k+1, \angle(\xi, \omega) \leq 2^{-k} \}$ for another example); we omit the details.

The red wave $\phi$ only propagates in directions transverse to $(1,\omega)$ and so cannot send all of its energy along the tube $T_{t_0,x_0,\omega,k}$.  However, it can still have a substantial presence in this tube as follows.  One can cover $T_{t_0,x_0,\omega}$ by about $O(2^k)$ unit cubes $Q$.  For each such cube $Q$, one can find a red wave $\phi_Q$ of energy $\M(\phi_Q)=1$ which concentrates in $Q$ in the sense that $|\phi_Q(t,x)| \gtrsim 1$ for all $(t,x) \in Q$.  If one then sets $\phi := \sum_Q \epsilon_Q c_Q \phi_Q$, where $c_Q$ are coefficients with $\sum_Q |c_Q|^2 = 1$, and $\epsilon_Q = \pm 1$ are iid signs, then an application of Khintchine's inequality\footnote{Actually, there is already enough orthogonality here that the random signs $\epsilon_Q$ are not needed, provided that one supplies a sufficient amount of spacing between the $Q$.} shows that \eqref{l2-noloc} is essentially sharp for this choice of $\phi, \psi$ (and also shows that the exponent of $k$ in \eqref{lp-noloc} cannot be significantly improved).  Observe in this example that the red wave $\phi$ is not concentrated in a single cube $Q$, but can now be dispersed along many cubes intersecting a given tube \eqref{tube}; in particular, some modification to Corollary \ref{concpt} or Corollary \ref{prof} is needed to generalise to the unbalanced-frequency case.

With this motivation, we can now state our main theorem.

\begin{theorem}[Profile decomposition]\label{profile2}  Let $0 < \delta < 1$, let $\phi$ be a red wave of frequency $1$. Then there exists a collection $(T_\beta)_{\beta \in B}$ of infinite tubes \eqref{tube} with cardinality at most $O(\delta^{-O(1)})$ such that
$$ \| \phi \psi \|_{L^2((\R \times \R^n) \backslash \bigcup_{\beta \in B} T_\beta)} \lesssim \delta \M(\phi)^{1/2} \M(\psi)^{1/2}$$
for all $k \geq 0$ and all blue waves $\psi$ of frequency $2^k$.
\end{theorem}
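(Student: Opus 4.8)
The plan is to prove Theorem \ref{profile2} by an iterative "energy‑removal" scheme, peeling off one tube at a time as long as the bilinear estimate fails. Suppose that the conclusion fails, so there is some $k \geq 0$ and some blue wave $\psi$ of frequency $2^k$, normalized to $\M(\psi) = 1$, with
$$ \| \phi\psi\|_{L^2((\R\times\R^n)\setminus\bigcup_{\beta\in B} T_\beta)} > \delta \M(\phi)^{1/2}$$
for every collection $(T_\beta)$ of the allowed cardinality. The heart of the matter is a \emph{single‑tube inverse statement}: if $\|\phi\psi\|_{L^2(\R\times\R^n)} \geq \delta\M(\phi)^{1/2}$, then there is an infinite tube $T = T_{t_0,x_0,\omega,\infty}$ such that
$$ \int_{T} |\phi(t,x)|^2\,\frac{dx\,dt}{2^k} \gtrsim \delta^{O(1)}\,\M(\phi),$$
i.e. $\phi$ carries a $\delta^{O(1)}$‑fraction of its mass (suitably averaged along the null direction) inside $T$. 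Granting this, one removes $T$, observes that the portion of $\phi$ inside $T$ has mass $\gtrsim \delta^{O(1)}\M(\phi)$, and — crucially — that this removal genuinely \emph{decreases a bounded quantity}. The bounded quantity is not $\M(\phi)$ itself (which does not decrease when we restrict to the complement of a tube, since restriction in physical space is not an orthogonal projection on the Fourier side), but rather the $L^2_{t,x}$ norm squared of $\phi$ over a fixed large slab, or better, a Strichartz‑type quantity such as $\|\phi\|_{L^{2(n+1)/(n-1)}_{t,x}}^{2(n+1)/(n-1)}$, which is finite by the classical linear Strichartz inequality and which drops by $\gtrsim \delta^{O(1)}$ at each step. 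This forces the iteration to terminate after $O(\delta^{-O(1)})$ steps, producing the desired family $B$.

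\textbf{Proof of the single‑tube inverse statement.} This is where the bilinear $L^p$ estimate of Proposition \ref{bilp2} enters. Fix $\eps$ small (say $\eps = 1/100$) and $p = \frac{n+3}{n+1}$, or any fixed $p<2$; interpolating $L^2_{t,x}$ between $L^p_{t,x}$ and $L^\infty_{t,x}$ and using Proposition \ref{bilp2} together with the Bernstein bound $\|\psi\|_{L^\infty_{t,x}} \lesssim 2^{kn/2}\M(\psi)^{1/2}$, we get
$$ \|\phi\psi\|_{L^2_{t,x}}^2 \lesssim_\eps 2^{k(1-\theta)(\frac1p-\frac12+\eps)} \cdot 2^{k n \theta / 2}\cdot \|\phi\psi\|_{L^p_{t,x}}^{(1-\theta)\cdot\frac{2}{?}}\cdots$$
— the exact bookkeeping is routine — which shows that the $L^2$ mass of $\phi\psi$ at scale $2^k$ lives, up to $\delta^{O(1)}$ losses, on a set of bounded measure in the $2^k$‑normalized metric, i.e. on $O(1)$ worth of $1\times 2^k$ tubes. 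More precisely: decompose spacetime into $1\times 2^k$ tubes $T$ (in the direction of propagation of $\phi$'s interaction with $\psi$, i.e. directions $\omega$ with $\angle(\omega,e_1)\le \pi/8$); on each such tube the sharp example shows $\|\phi\psi\|_{L^2(T)}^2 \lesssim \M(\psi)^{1/2}\cdot(\text{local mass of }\phi\text{ in }T)$ — this is just Cauchy–Schwarz in $x$ on time‑slices using $\|\psi(t)\|_{L^\infty_x}\lesssim 2^{kn/2}$ is too lossy, so instead one uses the refined local $L^2$ bound for $\psi$, namely $\int_{|x-x_0-\omega(t-t_0)|\le 1}|\psi|^2\,dx \lesssim \M(\psi)$ after Fourier localization of $\psi$ to the relevant $2^{-k}$ sector. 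Summing the $\delta^{O(1)}$‑significant tubes and using Proposition \ref{bilp2} to bound the number of them, one extracts a single tube $T$ with $\|\phi\psi\|_{L^2(T)} \gtrsim \delta^{O(1)}\M(\phi)^{1/2}$, hence (by the same local Cauchy–Schwarz, this time as a lower bound input) $\int_T |\phi|^2 \frac{dx\,dt}{2^k} \gtrsim \delta^{O(1)}\M(\phi)$.

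\textbf{The main obstacle.} The serious technical point — and the one I expect to occupy the bulk of the real proof — is showing that removing the tube $T$ produces a \emph{genuine, quantified decrease} in a finite a priori controlled quantity, so that the iteration terminates. Restricting $\phi$ to the complement of $T$ is a physical‑space truncation, which interacts badly with the frequency‑localization hypothesis ($\phi$ has frequency $1$): the truncated function is no longer exactly frequency‑localized, so $\M$ is not monotone and the "new" $\phi$ at each stage must be re‑projected to frequency $1$, introducing tails. The fix is to run the iteration not on $\phi$ directly but on a finite‑dimensional proxy: since $\phi$ has frequency $1$, its mass is essentially concentrated, for the purposes of the bilinear estimate against a frequency‑$2^k$ blue wave, on the behavior over a spacetime region whose relevant "degrees of freedom" are controlled — one can discretize $\phi$ into unit‑scale wave packets and work with the finitely‑supported packet coefficients, for which removing a tube removes a definite fraction of the $\ell^2$ mass of the coefficient sequence. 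Alternatively, and more cleanly, one tracks the monotone quantity $\|\phi\|_{L^{q}_{t,x}([-2^K,2^K]\times\R^n)}$ for a large fixed $K$ and the Strichartz exponent $q = \frac{2(n+1)}{n-1}$; the local‑mass lower bound $\int_T|\phi|^2\,dx\,dt/2^k \gtrsim \delta^{O(1)}$ together with Bernstein ($|\phi|\lesssim 1$ pointwise) forces $\int_T |\phi|^q\,dx\,dt \gtrsim \delta^{O(1)}\cdot 2^k$, and one must check that the truncation $\phi\mapsto \phi\cdot 1_{(\R\times\R^n)\setminus T}$ — or rather, its re‑projection to frequency $1$ — decreases this Strichartz norm by a comparable amount, modulo frequency tails that are summably small over the $O(\delta^{-O(1)})$ steps. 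Handling these tails carefully, and getting honest uniformity in $k$ (the constants in Proposition \ref{bilp2} carry $2^{\eps k}$, which must be absorbed into the $\delta^{O(1)}$ losses by choosing $\eps$ depending on the final target power), is the delicate part; everything else is dyadic pigeonholing and Cauchy–Schwarz.
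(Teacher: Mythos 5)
There is a genuine gap, and it sits exactly at your ``single-tube inverse statement'' together with the termination step you flag as the main obstacle. The statement that some infinite tube $T$ captures a $\delta^{O(1)}$ fraction of the $2^k$-normalised mass, $\int_T |\phi|^2\,\frac{dx\,dt}{2^k} \gtrsim \delta^{O(1)}\M(\phi)$, is false for large $k$: since the red wave $\phi$ propagates transversally to every tube oriented along a blue null direction $(1,\omega)$, the standard $TT^*$/local energy decay bound (the same one invoked in the proof of Corollary \ref{fungi-2}) gives $\|\phi\|_{L^2_t L^\infty_x(T)} \lesssim \M(\phi)^{1/2}$, hence $\int_T |\phi|^2\,dx\,dt \lesssim \M(\phi)$ for \emph{every} infinite tube, so the normalised integral is $\lesssim 2^{-k}\M(\phi)$ and cannot exceed $\delta^{O(1)}\M(\phi)$ once $2^k \gg \delta^{-C}$. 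This is not a bookkeeping issue: the sharp example described before Theorem \ref{profile2} spreads $\phi$ over $\sim 2^k$ unit cubes along the tube with $\ell^2$-normalised coefficients, so no fixed fraction of the low-frequency mass need lie in any one tube, and the route through Proposition \ref{bilp2} plus Bernstein/interpolation inevitably loses powers of $2^k$ (this is precisely why the paper says that estimate is ``not directly able'' to give the unbalanced inverse theory). The quantity that genuinely is large on a bad tube is $\|\phi\|_{L^2_t L^\infty_x(T)} \gtrsim \delta$, but this is not a mass fraction and has no $k$-uniform budget over separated tubes (it can be $\sim 1$ simultaneously on $\sim 2^{k(n-1)}$ separated tubes), so your greedy removal has no a priori bound on its number of steps; and your proposed monotone proxies do not repair this: a fixed slab $[-2^K,2^K]$ cannot control a global-in-time statement, the claimed decrement $\gtrsim \delta^{O(1)}2^k$ of a Strichartz norm contradicts the a priori bound $\|\phi\|_{L^{2(n+1)/(n-1)}_{t,x}} \lesssim \M(\phi)^{1/2}$, and the truncation/re-projection tails are exactly the unresolved difficulty.

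For comparison, the paper's proof is organised to avoid both problems. First, a $\psi$-dependent localisation (Corollary \ref{local2}) shows the product concentrates on $O(\delta^{-O(1)})$ $1\times 2^k$ tubes on which $\psi$ is large; this rests on the induction-on-scales decomposition of Proposition \ref{decomp} and a $TT^*$/covering argument for $\psi$, not on the $L^p$ bilinear estimate. Second, universality is obtained from Proposition \ref{tinny}: the iteration is run on $\phi$ itself, and when $\|\phi\|_{L^2_t L^\infty(T)} \geq \delta$ one constructs by duality an explicit red wave $F$ via \eqref{fdef} -- frequency-localised by design through the cutoff $\eta$, so no re-projection is ever needed -- with $\M(F) \lesssim \log\frac{1}{\delta}$ and $\langle \phi(0), F(0)\rangle_{L^2_x} \gtrsim \delta$; the cosine rule then decrements the genuine mass $\M(\phi)$ by $\gtrsim \delta^2/\log\frac{1}{\delta}$ per step (Proposition \ref{concrem}), which is the $k$-independent monotone quantity your scheme lacks. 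Finally, on each $\psi$-dependent bad tube the product is controlled by $\|\phi\|_{L^2_t L^\infty_x}\|\psi\|_{L^\infty_t L^2_x}$, which is how the tubes are made independent of $\psi$ and of $k$. Any repair of your approach would essentially have to reproduce these two ingredients: a concentration quantity measured in $L^2_t L^\infty_x$ rather than mass fraction, and a subtraction performed on the Fourier side so that $\M(\phi)$ itself decreases.
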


As a corollary, we can generalise Corollary \ref{fungi} to the imbalanced frequency case:

\begin{corollary}[Fungibility of bilinear $L^2$ Strichartz]\label{fungi-2}  Let $\phi$ be a red wave of frequency $1$, and let $0 \leq \delta \lesssim 1$.  Then one can decompose $\R$ into $\lesssim \delta^{-O(1)}$ intervals $I$ such that
\begin{equation}\label{fisy-2}
\| \phi \psi \|_{L^2_{t,x}(I \times \R^n)} \lesssim \delta \M(\phi)^{1/2} \M(\psi)^{1/2}
\end{equation}
for all such $I$ and all blue waves $\psi$ of frequency $2^k$ for some $k \geq 0$.
\end{corollary}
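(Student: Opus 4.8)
The plan is to imitate the deduction of Corollary~\ref{fungi} from Corollary~\ref{prof}, with the exceptional cubes there replaced by the tubes of Theorem~\ref{profile2}, and with the transversality between red waves and forward‑null tubes supplying an integrable‑in‑time weight. Apply Theorem~\ref{profile2} to $\phi$, with $\delta$ replaced by a small absolute multiple of itself, to obtain infinite tubes $(T_\beta)_{\beta\in B}$ with $|B|\lesssim\delta^{-O(1)}$ and
\[ \|\phi\psi\|_{L^2((\R\times\R^n)\backslash\bigcup_{\beta\in B}T_\beta)} \;\lesssim\; \delta\,\M(\phi)^{1/2}\M(\psi)^{1/2} \]
for all $k\ge 0$ and all blue waves $\psi$ of frequency $2^k$. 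Write $T_\beta(t):=\{x\in\R^n:(t,x)\in T_\beta\}$ for the unit‑ball time slices of $T_\beta$. For any interval $I\subseteq\R$ the contribution of $(I\times\R^n)\backslash\bigcup_\beta T_\beta$ is acceptable by the display above, so it remains to control the on‑tube part; using the identity $\|\psi(t,\cdot)\|_{L^2_x}^2=\M(\psi)$ from Plancherel (cf.\ \eqref{massif}), and hence $\int_{T_\beta(t)}|\psi(t,x)|^2\,dx\le\M(\psi)$, we get
\[ \Big\|\phi\psi\Big\|_{L^2\big((I\times\R^n)\cap\bigcup_{\beta}T_\beta\big)}^2 \;\le\; \sum_{\beta\in B}\int_I \Big(\sup_{x\in T_\beta(t)}|\phi(t,x)|^2\Big)\,\|\psi(t,\cdot)\|_{L^2_x}^2\,dt \;=\; \M(\psi)\int_I p^*(t)\,dt, \]
where the weight $p^*(t):=\sum_{\beta\in B}\sup_{x\in T_\beta(t)}|\phi(t,x)|^2$ depends only on $\phi$ and on the chosen tubes, not on $\psi$ or $k$.

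The heart of the matter is the estimate $\|p^*\|_{L^1_t(\R)}\lesssim|B|\,\M(\phi)\lesssim\delta^{-O(1)}\M(\phi)$. Since each $\phi(t,\cdot)$ is band‑limited to $\{|\xi|\le 2\}$, Bernstein's inequality in the form $\sup_{|x-c|\le1}|\phi(t,x)|^2\lesssim\int_{\R^n}|\phi(t,z)|^2\,w(c-z)\,dz$, for a fixed weight $w(y)=\langle y\rangle^{-4n}$ (whose Fourier transform is bounded and decays exponentially, hence is integrable on hyperplanes), reduces this to the transversality estimate $\int_{\R}\int_{\R^n}|\phi(t,z)|^2\,w(\gamma_\beta(t)-z)\,dz\,dt\lesssim\M(\phi)$ for each $\beta$, where $\gamma_\beta(t)=x_\beta+\omega_\beta(t-t_\beta)$ is the core of $T_\beta=T_{t_\beta,x_\beta,\omega_\beta,\infty}$. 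This is where it is essential that $\phi$ is \emph{red}: the energy of $\phi$ propagates with spatial velocities $-\widehat\xi$, all within $\pi/8$ of $-e_1$, while $T_\beta$ translates with spatial velocity $\omega_\beta$ within $\pi/8$ of $+e_1$, so $|\widehat\xi+\omega_\beta|\ge 2\cos(\pi/4)\gtrsim1$ throughout the Fourier support of $\phi$, and each null ray of $\phi$ spends only a bounded time in $T_\beta$. Quantitatively, writing $\phi(t,x)=\int f(\xi)e^{2\pi i(x\cdot\xi+t|\xi|)}\,d\xi$, expanding $|\phi(t,z)|^2$, integrating in $z$ against $w(\gamma_\beta(t)-z)$ and then in $t\in\R$ turns the left‑hand side, up to a unimodular factor, into the absolute value of
\[ \int\!\!\int f(\xi)\,\overline{f(\xi')}\;\widehat w(\xi-\xi')\;\delta\big(|\xi|-|\xi'|+\omega_\beta\cdot(\xi-\xi')\big)\,d\xi\,d\xi'; \]
the $\xi'$‑gradient (resp.\ $\xi$‑gradient) of the delta's argument equals $-(\widehat{\xi'}+\omega_\beta)$ (resp.\ $\widehat\xi+\omega_\beta$), of size $\gtrsim1$, so the delta restricts $(\xi,\xi')$ to a smooth hypersurface through the diagonal on which $\widehat w$ is absolutely integrable, and Schur's test (Cauchy--Schwarz in $\xi$ and in $\xi'$) bounds the expression by $\lesssim\|f\|_{L^2}^2=\M(\phi)$. (The sharp spatial cutoff defining $T_\beta$ is handled as usual, by first dominating $\mathbf 1_{T_\beta}$ by a smooth bump adapted to a slightly dilated tube.)

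Granting the weight bound, the corollary follows by a greedy partition. As $p^*\ge0$ lies in $L^1_t(\R)$ with $\|p^*\|_{L^1_t}\lesssim\delta^{-O(1)}\M(\phi)$, one may cut $\R$ successively at the first time the running integral of $p^*$ reaches $\delta^2\M(\phi)$; by absolute continuity of $t\mapsto\int^t p^*$ this produces a finite family of intervals $I$ (two of them unbounded), with $\int_I p^*\le\delta^2\M(\phi)$ on each and of cardinality $\lesssim\|p^*\|_{L^1_t}/(\delta^2\M(\phi))\lesssim\delta^{-O(1)}$. For such an $I$, summing the off‑tube and on‑tube estimates above gives $\|\phi\psi\|_{L^2_{t,x}(I\times\R^n)}^2\lesssim\delta^2\M(\phi)\M(\psi)$ uniformly over all $k\ge0$ and all blue waves $\psi$ of frequency $2^k$, which is \eqref{fisy-2}.

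I expect the only real work to be the transversality estimate of the second paragraph; the rest is bookkeeping. A notable contrast with the proof of Corollary~\ref{fungi} is that here there is no need to treat ``short'' time intervals separately: precisely because the tubes produced by Theorem~\ref{profile2} are genuinely transverse to the red wave $\phi$, the weight $p^*$ is globally integrable in time, and a single greedy partition does everything. One should of course verify that the angular apertures ($\pi/8$ for red/blue waves and for the tube directions in \eqref{tube}) are small enough to keep $\widehat\xi+\omega_\beta$ bounded away from $0$; this is the case since $\tfrac\pi8+\tfrac\pi8<\tfrac\pi2$.
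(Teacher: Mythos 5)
Your proposal is correct and follows essentially the same route as the paper: apply Theorem \ref{profile2}, bound the on-tube contribution by the weight $\sum_\beta \|\phi 1_{T_\beta}(t)\|_{L^\infty_x}^2$ together with $\|\psi(t,\cdot)\|_{L^2_x}^2=\M(\psi)$, prove the $L^1_t$ bound on that weight via red-wave/tube transversality, and then partition $\R$ greedily. Your second paragraph is just an explicit write-out (Bernstein domination plus the delta-function/Schur computation) of the ``standard $TT^*$ argument'' the paper invokes for $\|\phi\|_{L^2_t L^\infty_x(T_\beta)} \lesssim \M(\phi)^{1/2}$, so the two proofs coincide in substance.
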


\begin{proof} We may normalise $\M(\phi)=1$.  Let $(T_\beta)_{\beta \in B}$ be as in Theorem \ref{profile2}.  A standard $TT^*$ argument (exploiting the fact that the red wave $\phi$ propagates in directions transverse to the infinite tubes $T_\beta$) shows that
$$ \| \phi \|_{L^2_t L^\infty_x(T_\beta)} \lesssim 1$$
for all $T_\beta$; summing this in $\beta$, we obtain
$$ \int_\R \sum_{\beta \in B} \| \phi 1_{T_\beta}(t) \|_{L^\infty_x(\R^n)}^2\ dt  \lesssim \delta^{-O(1)}.$$
Thus we can partition $\R$ into $\lesssim \delta^{-O(1)}$ intervals such that
$$ \int_I \sum_{\beta \in B} \| \phi 1_{T_\beta}(t) \|_{L^\infty_x(\R^n)}^2\ dt  \lesssim \delta^2$$
or in other words
$$ (\sum_{\beta \in B} \| \phi \|_{L^2_t L^\infty_x(T_\beta \cap (I \times \R^n))}^2)^{1/2} \lesssim \delta.$$
Now let $\psi$ be a blue wave of frequency $2^k$ for some $K \geq 0$.
$$ \| \psi \|_{L^\infty_t L^2_x(I \times \R^n)} = \M(\psi)^{1/2}$$
we see that
$$ (\sum_{\beta \in B} \| \phi \psi \|_{L^2_t L^\infty_x(T_\beta \cap (I \times \R^n))}^2)^{1/2} \lesssim \delta \M(\psi)^{1/2}$$
and hence
$$ \| \phi \psi \|_{L^2_t L^\infty_x(\bigcup_{\beta \in B} T_\beta \cap (I \times \R^n))} \lesssim \delta \M(\psi)^{1/2}.$$
The claim now follows from Theorem \ref{profile2} and the triangle inequality.
\end{proof}

In \cite{tao:heatwave4}, we will use Theorem \ref{profile2} to establish a more general version of Corollary \ref{fungi-2}, in which $\phi, \psi$ solve an inhomogeneous wave equation rather than the free wave equation, and have a more general frequency support.  This will then be used to establish a large energy perturbation theory for wave maps which only involves a bounded number of time intervals, which will be needed in order to create minimal-energy blowup solutions.

\subsection{Acknowledgements}

The author thanks Jacob Sterbenz for useful discussions.  The author is supported by NSF grant DMS-0649473, a grant from the Macarthur Foundation, and the NSF Waterman award.

\section{A key decomposition and its consequences}

Throughout this paper, a \emph{cube} denotes a cube in spacetime $\R \times \R^n$ with sides parallel to the axes.  If $Q$ is a cube and $0 < c < 1$, define $(1-c)Q$ to be the cube with the same centre as $Q$ but with $1-c$ of the sidelength.

We need the useful technical notion of the \emph{margin} of a red wave.

\begin{definition}[Margin]\cite{tao:cone}  If $\phi$ is a red wave of frequency $2^k$, we define the \emph{margin} $\margin(\phi)$ of $\phi$ to be the quantity
$$ \margin(\phi) := \dist( 2^{-k} \supp(\tilde \phi), \partial (\Sigma_\red \cap \Sigma_0) )$$
where $\partial (\Sigma_\red \cap \Sigma_0)$ is the topological boundary of $\Sigma_\red \cap \Sigma_0$ in the cone $\Sigma$.  
\end{definition}

We now recall a key decomposition from \cite{tao:cone} which will underlie the results here.

\begin{proposition}[Decomposition]\label{decomp}  Let $Q$ be a cube of sidelength $R \gg 1$ (i.e. $R \geq C$ for some sufficiently large absolute constant $C$).  Let $\phi$ be a red wave of frequency $1$ and margin at least $R^{-1/2}$, and let $\psi$ be a blue wave of frequency $2^k$ for some $k \geq 0$.

Decompose $Q$ into $2^{n+1}$ subcubes $Q_1,\ldots,Q_{2^{n+1}}$ of sidelength $R/2$, and let $0 < c < 1/2$.  Then there exists red waves $\phi_i$ of frequency $1$ for $i=1,\ldots,2^{n+1}$ with the following properties:
\begin{itemize}
\item[(i)] (Bessel inequality) We have
\begin{equation}\label{bessel}
\sum_{i=1}^{2^{n+1}} \M(\phi_i) \leq (1 + O(c)) \M(\phi).
\end{equation}
\item[(ii)] (Margin bound)  For every $1 \leq j \leq 2^{n+1}$, we have
\begin{equation}\label{margin}
\margin(\phi_i) \geq \margin(\phi) - O( R^{-1/2} ).
\end{equation}
\item[(iii)] (Approximation)  For every \emph{distinct} $1 \leq i, i' \leq 2^{n+1}$, we have
\begin{equation}\label{approx}
 \| |\phi - \phi_i| |\psi| \|_{L^2_{t,x}( (1-c) Q_{i'} )} \lesssim c^{-O(1)} R^{-(n-1)/4} \M(\phi)^{1/2} \M(\psi)^{1/2}.
\end{equation}
\end{itemize}
\end{proposition}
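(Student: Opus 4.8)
The plan is to construct each $\phi_i$ as a spatially localized (in a null-frame sense) modification of $\phi$, following the wave packet / tube decomposition philosophy of \cite{tao:cone}. Concretely, since $\phi$ has frequency $1$ and margin at least $R^{-1/2}$, its Fourier support sits well inside $\Sigma_\red$, and one can decompose $\phi$ into a sum of wave packets, each adapted to a $R^{1/2} \times \cdots \times R^{1/2} \times R$ slab (a ``plank'') in spacetime oriented along a null direction in the red cone; these planks have the property that a packet either passes through a given subcube $Q_i$ (enlarged slightly) or is exponentially small there. For each $i$, I would like to define $\phi_i$ by keeping exactly those wave packets whose plank meets a suitable dilate of $Q_i$ (say $(1+10c)Q_i$), and discarding the rest. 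The point of the $R^{-1/2}$ margin hypothesis is precisely that this truncation in physical space costs only an $O(R^{-1/2})$ loss in the margin, since each packet's frequency localization is at scale $R^{-1/2}$; this gives (ii). For (i), the wave packets are almost orthogonal (Bessel), and assigning each packet to the set of indices $i$ whose dilated subcube it meets produces an overlap factor that is $1 + O(c)$ once $c$ is small, because a generic plank of length $R$ and width $R^{1/2}$ can only be ``near'' (within $cR$) the boundary between two subcubes of sidelength $R/2$ for a proportion $O(c)$ of the relevant packets — this is a routine covering/counting estimate, and gives \eqref{bessel}.

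The substance is (iii). Fix distinct $i, i'$. On $(1-c)Q_{i'}$, the difference $\phi - \phi_i$ consists of exactly those wave packets that were discarded from $\phi_i$, i.e. packets whose plank does \emph{not} meet $(1+10c)Q_{i'}$ (wait — one must be careful: $\phi_i$ keeps packets meeting $(1+10c)Q_i$, so $\phi - \phi_i$ keeps packets meeting $(1+10c)Q_i^c$, i.e. \emph{not} meeting that dilate). Among those, the ones that are not exponentially negligible on $(1-c)Q_{i'}$ must have planks that do meet a slight dilate of $Q_{i'}$ but not the dilate of $Q_i$; geometrically these are planks that ``graze'' $Q_{i'}$, entering and leaving through its sides rather than traversing it fully, because a null plank that genuinely traverses $Q_{i'}$ would also have to pass near the common region and hence near $Q_i$ in the relevant configurations — more precisely one shows such grazing packets intersect $(1-c)Q_{i'}$ in a set of measure $\lesssim c^{-O(1)} R^{(n-1)/2} \cdot (\text{something small})$ relative to a full traversal. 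The quantitative gain $R^{-(n-1)/4}$ in the $L^2$ norm should then come from: the grazing packets see only a short time-interval or thin spatial slice inside $(1-c)Q_{i'}$ compared with the full $R$-length of the cube, and after multiplying by $|\psi|$ and using the bilinear $L^2$ Strichartz estimate \eqref{l2-noloc} on this reduced region (exploiting transversality of the red null directions against the blue cone exactly as in Proposition \ref{basl2}), the smaller spacetime volume / shorter propagation distance yields the stated power of $R$. The $c^{-O(1)}$ absorbs the dilation factors.

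I expect the main obstacle to be step (iii): controlling the grazing wave packets and extracting the precise exponent $(n-1)/4$. One has to verify that discarding packets by the crude criterion ``plank meets $(1+10c)Q_i$'' does not accidentally keep, or throw away, packets that contribute non-negligibly on $(1-c)Q_{i'}$, and that the residual contribution genuinely gains a power $R^{-(n-1)/4}$ rather than merely $R^{-\eps}$. This is where the geometry of null planks relative to axis-parallel cubes, the finite-overlap of the decomposition, and the bilinear Strichartz bound \eqref{l2-noloc} restricted to the thin region all have to be combined carefully; the tails of the wave packets (Schwartz but not compactly supported) also need to be handled, which is the reason for the $(1-c)$ shrinking of $Q_{i'}$ and for allowing the $O(R^{-1/2})$ margin loss to be reabsorbed at each stage of an eventual iteration. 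By contrast (i) and (ii) are essentially bookkeeping once the wave packet decomposition with its orthogonality and frequency-localization properties is set up.
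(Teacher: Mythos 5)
There is a genuine gap, and it is located exactly where you suspected, but it is worse than a missing computation: the construction you propose cannot satisfy property (iii) at all. You define $\phi_i$ by keeping the wave packets whose plank meets $(1+10c)Q_i$, so $\phi-\phi_i$ consists of the packets whose planks \emph{avoid} a neighbourhood of $Q_i$. But such packets can pass straight through the middle of a different subcube $Q_{i'}$: since red directions make angle at most $\pi/8$ with $e_1$, one can choose a null plank through the centre of $Q_{i'}$ whose direction drifts away from $Q_i$, and over the relevant time range it never comes within distance $\sim R$ of $Q_i$. Taking $\phi$ to consist of such packets and $\psi$ a blue wave concentrated on a transverse blue tube crossing them inside $(1-c)Q_{i'}$, the left-hand side of \eqref{approx} is comparable to $\M(\phi)^{1/2}\M(\psi)^{1/2}$, with no gain of $R^{-(n-1)/4}$ whatsoever. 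Your "grazing" heuristic --- that a plank genuinely traversing $Q_{i'}$ must pass near $Q_i$ --- is simply false for general distinct subcubes. The proposed mechanism for the gain is also not a real one: the bilinear estimate \eqref{l2-noloc} does not improve when restricted to a thinner region, so no power of $R$ can be extracted that way. In addition, your claim that the overlap factor in \eqref{bessel} is $1+O(c)$ fails: a null plank of length $\sim R$ inside $Q$ typically meets (dilates of) at least two of the subcubes of side $R/2$, so your assignment yields a Bessel constant like $2$ rather than $1+O(c)$; the $1+O(c)$ is essential, since Proposition \ref{local} iterates \eqref{bessel-2} over many scales and any fixed constant larger than $1$ per step would blow up.

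For comparison, note that the paper does not reprove this proposition: its proof is a citation of Proposition 15.1 of \cite{tao:cone}, and the construction there differs from yours in an essential way. The waves $\phi_i$ are \emph{not} obtained by a $\psi$-independent geometric assignment of packets to the subcubes their tubes meet; they are built using the blue wave $\psi$ itself (this is why $\psi$ appears in the hypotheses), via the "wave table" machinery in which packets are distributed among subcubes according to where their bilinear interaction with $\psi$ lives, arranged so as to give the sharp $1+O(c)$ Bessel constant. The gain $R^{-(n-1)/4}$ in \eqref{approx} is the technical heart of the Wolff--Tao induction-on-scales argument (\cite{wolff:cone}, \cite{tao:cone}): it comes from improved bilinear estimates for the non-retained, non-concentrating packets, proved using stationary-phase kernel decay and energy-concentration estimates on light cones, none of which appears in your sketch. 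So while the wave-packet philosophy is the right starting point, the specific decomposition you propose fails (i) and (iii), and the key estimate you would need is exactly the part left unproved.
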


\begin{proof} See \cite[Proposition 15.1]{tao:cone} (specialising $C_0=1$, $j=0$, $j'=k$).  Note that the mass $\M(\phi)$ of a wave is denoted $\E(\phi)$ in \cite{tao:cone}.
\end{proof}

This gives us a preliminary localisation result.

\begin{proposition}[Localisation]\label{local}  Let $Q$ be a cube of sidelength $R > 0$.  Let  $\phi$ be a non-trivial red wave of frequency $1$, and let $\psi$ be a non-trivial blue wave of frequency $2^k$ for some $k \geq 0$.  Let $\Omega \subset \R \times \R^n$, and suppose that
$$ \| \phi \psi \|_{L^2(Q \cap \Omega)} \gtrsim \delta \M(\phi)^{1/2} \M(\psi)^{1/2}$$
for some $0 < \delta < 1$. Then there exists a cube $q$ of sidelength $1$ such that
\begin{equation}\label{psiq}
 \| \psi \|_{L^2(q \cap \Omega)} \gtrsim \delta^{O(1)} \M(\psi)^{1/2}.
\end{equation}
\end{proposition}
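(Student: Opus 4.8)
The plan is to use Proposition \ref{decomp} iteratively to break the cube $Q$ down into unit cubes, tracking where the $L^2$ mass of $\phi\psi$ concentrates at each stage, and then to convert an $L^2$ bound on $\phi\psi$ over a single unit cube into an $L^2$ bound on $\psi$ alone by invoking the pointwise control $|\phi| \lesssim \M(\phi)^{1/2}$ coming from Bernstein's inequality (available since $\phi$ has frequency $1$).

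First I would reduce to the case that $\phi$ has margin at least $R^{-1/2}$, or rather at least some fixed small constant, by a preliminary truncation of the Fourier support of $\phi$ away from $\partial(\Sigma_\red \cap \Sigma_0)$; the portion of $\phi$ removed is small in mass and hence contributes negligibly to $\|\phi\psi\|_{L^2(Q\cap\Omega)}$ by Proposition \ref{basl2}, at the cost of worsening $\delta$ by a constant factor. (One must also handle the case $R \lesssim 1$ separately, but there $Q$ is already essentially a single unit cube.) Next I would run the dichotomy: apply Proposition \ref{decomp} with a suitable small constant $c$ to split $Q$ into $2^{n+1}$ subcubes $Q_1,\dots,Q_{2^{n+1}}$ of half the sidelength. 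By the triangle inequality and pigeonholing, there is some $i'$ with
$$ \|\phi\psi\|_{L^2((1-c)Q_{i'}\cap\Omega)} \gtrsim \delta \M(\phi)^{1/2}\M(\psi)^{1/2} $$
(the loss from passing to $(1-c)Q_{i'}$ is controlled since $\phi\psi$ has frequency $O(2^k)$ and hence cannot concentrate on the thin shell $Q_{i'}\setminus(1-c)Q_{i'}$ — here one uses a Bernstein/Strichartz estimate to bound $\|\phi\psi\|_{L^2}$ on a slab of thickness $c R$, choosing $c$ a small power of $\delta$). On $(1-c)Q_{i'}$, the approximation bound \eqref{approx} lets us replace $\phi$ by $\phi_{i'}$ up to an error of size $c^{-O(1)}R^{-(n-1)/4}\M(\phi)^{1/2}\M(\psi)^{1/2}$, which is $\ll \delta \M(\phi)^{1/2}\M(\psi)^{1/2}$ once $R$ is large enough depending on $\delta$; thus $\|\phi_{i'}\psi\|_{L^2((1-c)Q_{i'}\cap\Omega)} \gtrsim \delta \M(\phi)^{1/2}\M(\psi)^{1/2}$, and by \eqref{bessel} $\M(\phi_{i'})\leq(1+O(c))\M(\phi)$, while \eqref{margin} shows the margin has only dropped by $O(R^{-1/2})$, so the hypotheses are restored for the next stage with $\delta$ essentially unchanged. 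Iterating $O(\log R)$ times — the margin losses form a convergent geometric-type sum $\sum_j O((R/2^j)^{-1/2})$ which stays below the margin threshold — we arrive at a red wave $\tilde\phi$ of frequency $1$ with $\M(\tilde\phi)\lesssim\M(\phi)$ and a cube $q$ of sidelength $O(1)$ with $\|\tilde\phi\psi\|_{L^2(q\cap\Omega)}\gtrsim\delta'\M(\phi)^{1/2}\M(\psi)^{1/2}$, where $\delta' = \delta^{O(1)}$ absorbs all the constant-factor losses (the number of iterations is only logarithmic, so the cumulative loss is still polynomial in $\delta$ — this is the point where one must be a little careful that the losses per step are multiplicative constants, not constants depending on the step). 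Finally, since $\tilde\phi$ has frequency $1$, Bernstein gives $\|\tilde\phi\|_{L^\infty_{t,x}}\lesssim\M(\tilde\phi)^{1/2}\lesssim\M(\phi)^{1/2}$, so
$$ \delta'\M(\phi)^{1/2}\M(\psi)^{1/2} \lesssim \|\tilde\phi\psi\|_{L^2(q\cap\Omega)} \lesssim \M(\phi)^{1/2}\|\psi\|_{L^2(q\cap\Omega)}, $$
which rearranges to \eqref{psiq} after rescaling $q$ to have sidelength exactly $1$ (covering the $O(1)$-cube by $O(1)$ unit cubes and pigeonholing once more).

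The main obstacle I anticipate is bookkeeping the error terms so that the final constant is genuinely $\delta^{O(1)}$ rather than something like $\delta^{O(\log(1/\delta))}$: one has to choose $c$ as a fixed small power of $\delta$ and the stopping scale $R_0$ (at which iteration ceases) as a fixed power of $1/\delta$, and then verify that (a) the geometric series of margin losses $\sum_{R_0 \leq r,\ r \text{ dyadic}} r^{-1/2}$ is small enough not to exhaust the initial margin, (b) the approximation errors $c^{-O(1)}r^{-(n-1)/4}$ summed over the $O(\log(1/\delta))$ scales stay $\ll \delta$, and (c) the Bessel-inequality losses $\prod (1+O(c))$ over $O(\log(1/\delta))$ steps remain $O(1)$ — this last is automatic since $c$ is a small power of $\delta$ and there are only logarithmically many steps. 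A secondary technical point is the passage from $Q_{i'}$ to $(1-c)Q_{i'}$: one needs a clean statement that a wave (or product of two waves) of frequency $\lesssim 2^k$ has $L^2$ mass on a $cR$-neighborhood of $\partial Q_{i'}$ that is $O(c^{1/2})$ times its total mass on $Q_{i'}$, which follows from the $L^2$ Strichartz/Bernstein machinery but should be stated carefully.
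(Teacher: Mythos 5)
There are two genuine gaps here, and they are exactly the points the paper's argument is engineered to avoid. First, the accumulated pigeonholing loss. At each stage you select a single child cube on which $\|\phi\psi\|_{L^2}$ is still $\gtrsim\delta\M(\phi)^{1/2}\M(\psi)^{1/2}$; but each such selection among $2^{n+1}$ children costs a fixed multiplicative factor (at best $2^{-(n+1)/2}$) in the constant, and the number of iterations is $\sim\log_2(R/\delta^{-C})$, which is \emph{not} bounded in terms of $\delta$, since $R>0$ is arbitrary in the statement. So your final lower bound is of size $\delta\,2^{-c\log R}\sim\delta R^{-c}$, not $\delta^{O(1)}$: your parenthetical ``the number of iterations is only logarithmic, so the cumulative loss is still polynomial in $\delta$'' conflates $\log R$ with $\log(1/\delta)$. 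The paper never pigeonholes scale by scale. It carries the \emph{entire} family of waves $\phi_q$, $q\in{\mathcal Q}_j$, and tracks the aggregated quantity $F_j=(\sum_{q\in{\mathcal Q}_j}\|\phi_q\psi\|_{L^2(q\cap X(Q)\cap\Omega)}^2)^{1/2}$, which suffers no multiplicative degradation at all, only additive errors $O_\eps(2^{O(\eps(J-j))}(R/2^j)^{-(n-1)/4})$ from \eqref{approx-2} that sum geometrically to $O((R/2^J)^{-(n-1)/4})\ll\delta$; a single pigeonholing is performed at the final scale, and it is done \emph{relative to the masses} $\M(\phi_q)$ using the iterated Bessel bound \eqref{jump}, which is lossless. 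If you insist on following a single chain, you must at least pigeonhole relative to the child's mass (find $i'$ with $\|\phi_{i'}\psi\|^2\gtrsim\delta^2\M(\phi_{i'})\M(\psi)$, costing only the $(1+O(c))$ Bessel factors, which are summable if $c$ is taken scale-dependent); keeping the original normalisation $\M(\phi)^{1/2}$, as you do, cannot work uniformly in $R$.

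Second, the shell-discarding step is unjustified. The claim that $\phi\psi$ ``has frequency $O(2^k)$ and hence cannot concentrate on the thin shell $Q_{i'}\setminus(1-c)Q_{i'}$'' is false: frequency localisation at scale $2^k$ only rules out concentration at spatial scales $\ll 2^{-k}$, while the shell has thickness $cR\gg 1$; for example $\psi$ (and hence essentially all of the product on $\Omega$) can be concentrated on a tube lying entirely inside the shell, so no Bernstein/Strichartz argument bounds its $L^2$ mass there by $o(1)$ times the total. The paper resolves this once and for all \emph{before} iterating: it forms the sparsified set $X(Q)=\bigcap_{0\le j\le J}\bigcup_{q\in{\mathcal Q}_j}(1-c_j)q$ with summable scale-dependent parameters $c_j=\eps 2^{-\eps(J-j)}$, so that $|X(Q)|\sim|Q|$, and then \emph{averages over translations} of $Q$ to find a translate for which $\|\phi\psi\|_{L^2(X(Q)\cap\Omega)}\gtrsim\delta$; after that, no mass is ever lost at cube boundaries during the iteration, and the same $c_j$ keep the Bessel losses bounded. (A smaller but real issue of the same kind: your margin reduction by ``truncating the Fourier support of $\phi$ near $\partial(\Sigma_\red\cap\Sigma_0)$, the removed portion being small in mass'' is not valid, since $\phi$ may carry all of its mass near that boundary; the standard fix, used in the paper, is to split $\phi$ into $O(1)$ pieces and apply Lorentz transforms, losing only a constant.)
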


\begin{proof}  By decomposing $\phi$ into boundedly many pieces and applying some Lorentz transformations, we may assume that $\phi$ has margin at least $1/10$ (say).
We can then normalise $\M(\phi)=\M(\psi)=1$.  
Let $J$ be the largest integer such that $2^{-J} R \geq C \delta^{-C}$, where $C$ is a large absolute constant.  We may assume that $J \geq 0$, since the claim is trivial otherwise (just partition $Q$ into unit cubes, use the pigeonhole principle, and use Bernstein's inequality and \eqref{massif} to bound $\phi$ in $L^\infty$).

It is convenient to replace $Q$ by a slightly smaller set.
For any $0 \leq j \leq J$, let ${\mathcal Q}_{j}$ be the partition of $Q$ into $2^{(n+1)j}$ cubes of sidelength $R/2^j$.  Let $X(Q) \subset Q$ be the set
$$ X(Q) := \bigcap_{j=0}^J \bigcup_{q \in {\mathcal Q}_{j}} (1 - c_{j}) q$$
where $c_{j} := \eps 2^{-\eps(J-j)}$, and $\eps > 0$ is a small constant to be chosen later.  A short calculation shows that $|X(Q)| \sim |Q|$ if $\eps$ is small enough, so by averaging over translations of $X(Q)$ (and replacing $Q$ by a translate if necessary) we may assume that
$$ \| \phi \psi \|_{L^2(X(Q) \cap \Omega)} \gtrsim \delta.$$
Now we apply Proposition \ref{decomp} repeatedly (using the parameters $c_{j}$ at scale $R/2^j$).  This gives us a collection of red waves $\phi_q$ for each $q \in {\mathcal Q}_{j}$ and $0 \leq j \leq J$, with the following properties:
\begin{itemize}
\item[(i)] (Initial condition) $\phi_Q = \phi$.
\item[(ii)] (Bessel inequality) For each $0 \leq j \leq J-1$ and $q \in {\mathcal Q}_{j}$ we have
\begin{equation}\label{bessel-2}
\sum_{q' \in {\mathcal Q}_{j+1}: q' \subset q} \M( \phi_{q'} ) \leq (1 + O_\eps( 2^{-\eps(J-j)} )) \M(\phi_q).
\end{equation}
\item[(iii)] (Margin bound)  For each $0 \leq j \leq J-1$, $q \in {\mathcal Q}_{j}$, $q' \in {\mathcal Q}_{j+1}$ with $q' \subset q$ we have
\begin{equation}\label{margin-2}
\margin(\phi_{q'}) \geq \margin(\phi_q) - O( (R/2^j)^{-1/2} )
\end{equation}
(and so by induction, $\margin(\phi_q) \geq 1/20$).
\item[(iv)] (Approximation)  For each $0 \leq j \leq J-1$, $q \in {\mathcal Q}_{j}$, and distinct $q', q'' \in {\mathcal Q}_{j+1}$ with $q', q'' \subset q$ we have
\begin{equation}\label{approx-2}
 \| |\phi_q - \phi_{q'}| |\psi| \|_{L^2_{t,x}( (1-c_{j+1}) q'' )} \lesssim_\eps 2^{O(\eps(J-j))} (R/2^j)^{-(n-1)/4} \M(\phi_q)^{1/2}.
\end{equation}
\end{itemize}

Iterating \eqref{bessel-2} we see that
\begin{equation}\label{jump}
 \sum_{q \in {\mathcal Q}_j} \M( \phi_q ) \lesssim 1
\end{equation}
for all $0 \leq j \leq J$.  Also, if we let
$$ F_j := (\sum_{q \in {\mathcal Q}_j} \| \phi_q \psi \|_{L^2_{t,x}( q \cap X(Q) \cap \Omega )}^2)^{1/2}$$
then we see from square-summing \eqref{approx-2}, \eqref{jump}, and the triangle inequality that
$$ F_{j+1} = F_j + O_\eps( 2^{O(\eps(J-j))} (R/2^j)^{-(n-1)/4} )$$
for all $0 \leq j \leq J$.  Taking $\eps$ small enough, we can sum this in $j$ and conclude that
$$ F_J = F_0 + O( (R/2^J)^{-(n-1)/4} ).$$
On the other hand, from hypothesis we have $F_0 \gtrsim \delta$.  From definition of $J$, we conclude that
$$ F_J \gtrsim \delta,$$
and thus from \eqref{jump} and the pigeonhole principle, there exists $q \in {\mathcal Q}_J$ such that $\phi_q$ is non-zero and
$$ \| \phi_q \psi \|_{L^2_{t,x}( q \cap X(Q) \cap \Omega )} \gtrsim \delta \M( \phi_q )^{1/2}.$$
But from \eqref{massif} and Bernstein's inequality we have $\| \phi_q \|_{L^\infty_{t,x}} \lesssim \M( \phi_q )^{1/2}$, and thus
$$ \| \psi \|_{L^2_{t,x}( q )} \gtrsim \delta.$$
But $q$ has sidelength $O(\delta^{-O(1)})$, so the claim follows by covering $q$ by unit cubes.
\end{proof}

Now we analyse the cubes $q$ that obey the property \eqref{psiq}.  We begin with a covering lemma.

\begin{lemma}[Covering lemma]\label{cover}  Let $(T_\beta)_{\beta \in B}$ be a (possibly infinite) family of $1 \times 2^k$ tubes $T_\beta = T_{t_\beta,x_\beta,\omega_\beta,k}$ of the form \eqref{tube} with $t_\beta=0$, which are \emph{separated} in the sense that $|x_\beta-x_{\beta'}| + 2^k |\omega_\beta - \omega_{\beta'}| \gtrsim 1$ for any two distinct tubes $T_\beta, T_{\beta'}$.  For each $\beta \in B$ let $c_\beta \geq 0$ be a number such that
$$ \sum_{\beta \in B} c_\beta \leq 1.$$
Let $0 < \delta \leq 1$.  Then one has
$$\sum_{\beta \in B} c_\beta 1_{T_\beta}(t,x) \leq \delta$$
for all $(t,x)$ outside of a union of at most $O(\delta^{-3})$ $1 \times 2^k$ tubes.
\end{lemma}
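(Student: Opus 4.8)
The plan is to run a greedy/iterative selection argument: repeatedly extract the ``heaviest'' tube (in a suitable weighted sense), remove it along with its immediate neighbours, and show that after $O(\delta^{-3})$ steps the remaining points all satisfy the pointwise bound $\sum_\beta c_\beta 1_{T_\beta}(t,x) \le \delta$. The key structural input is the separation hypothesis $|x_\beta - x_{\beta'}| + 2^k|\omega_\beta - \omega_{\beta'}| \gtrsim 1$, which controls how much two distinct tubes can overlap. Concretely, at time slice $t$, the cross-section of $T_\beta$ is the unit ball centred at $x_\beta + \omega_\beta t$; if two separated tubes both contain a common point $(t,x)$ with $|t| \le 2^k$, then $|(x_\beta - x_{\beta'}) + (\omega_\beta - \omega_{\beta'}) t| \le 2$, and combined with the separation this forces $|\omega_\beta - \omega_{\beta'}| \lesssim 2^{-k}$ and $|x_\beta - x_{\beta'}| \lesssim 1$ (using $|t|\le 2^k$). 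So the set of tubes through a fixed point, at a fixed time, is ``coarsely a single tube's worth'' — their parameters $(x_\beta,\omega_\beta)$ lie in a box of dimensions $O(1) \times O(2^{-k})$. First I would make this overlap estimate precise: for any point $(t,x)$, the tubes containing it have parameters lying in such a box, but crucially they can still be spread out in the $t$-direction, so a naive ``bounded overlap'' claim is false — this is exactly why concentric tubes must be allowed in the conclusion.

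Next I would set up the iteration. Define, for a point $p=(t,x)$, the weight $w(p) := \sum_{\beta} c_\beta 1_{T_\beta}(p)$; we want to cover $\{w > \delta\}$ by few tubes. The selection step: among all tubes $T_\beta$ that still contain a point with $w > \delta$, pick one, call it $T_{\beta_1}$. Now I claim that removing $T_{\beta_1}$ together with the $O(1)$ tubes obtained by enlarging it appropriately — say the tubes $T_{\beta'}$ with $|x_{\beta'} - x_{\beta_1}| \lesssim 1$ and $2^k|\omega_{\beta'} - \omega_{\beta_1}| \lesssim 1$, which by separation number $O(1)$ of them and are contained in a single fattened tube $\tilde T_{\beta_1}$ of comparable dimensions — accounts for a definite portion of the ``mass''. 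The bookkeeping quantity should be $\sum_\beta c_\beta |T_\beta|$-type, or better, I would track $\int 1_{\{w>\delta\}} w \, d(t,x)$ restricted to where the selected tubes lived, and show each removed fattened tube absorbs $\gtrsim \delta \cdot 2^k$ of something, while the total budget is $\sum_\beta c_\beta |T_\beta| \lesssim 2^k \sum_\beta c_\beta \le 2^k$ — wait, that only gives $\delta^{-1}$ steps, not $\delta^{-3}$, so the honest accounting must be more delicate.

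The main obstacle, and where the $\delta^{-3}$ genuinely comes from, is that a single selected tube $T_{\beta_1}$ need not contribute much mass by itself (its $c_{\beta_1}$ could be tiny); rather, $w > \delta$ at a point $p \in T_{\beta_1}$ means \emph{many} low-weight tubes pass through $p$, and these are spread along a box of parameter-size $O(1)\times O(2^{-k})$ but possibly distributed over all times $|t|\le 2^k$. So the correct move is: when $w(p) > \delta$, the tubes through $p$ have total $c$-weight $> \delta$, they all fit (by the overlap estimate applied at the fixed slice $t(p)$) into a union of $O(1)$ ``spatial columns'', hence into a bounded number of infinite tubes in the $\omega$-direction; but since they may have different $t_\beta$... actually here $t_\beta = 0$ for all, so the only freedom is $(x_\beta,\omega_\beta)$, and the overlap estimate at the slice $t(p)$ really does confine them to an $O(1)\times O(2^{-k})$ box, i.e.\ to $O(1)$ of the tubes $T_\beta$. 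I would therefore argue: remove these $O(1)$ tubes; this decreases $\sum_\beta c_\beta$ by $> \delta$ \emph{only if} those tubes are never reused — but a removed tube could still be needed to kill $w > \delta$ at some far-away point. The resolution is a three-parameter counting: iterate, and observe that each selection either (a) removes $c$-mass $\gtrsim \delta$ permanently, giving $\le \delta^{-1}$ such steps, or (b) re-covers an already-seen region, and a separate argument using the $(x,\omega)$-parameter volume bound $\lesssim 1 \cdot 2^{-k} / (2^{-(n)}\text{-cells})$ limits these. I expect the cleanest route is to discretise the parameter space $\{(x_\beta,\omega_\beta)\}$ into $O(1)$-separated cells, note at most $O(\delta^{-1})$ cells can host a ``heavy'' point, and within each cell a $1$D covering argument in $t$ costs another $O(\delta^{-2})$, for $O(\delta^{-3})$ total. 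Making steps (a)/(b) mutually exhaustive and getting the exponents to multiply correctly — rather than a worse power — is the delicate part; everything else is the routine slicing estimate from the separation hypothesis.
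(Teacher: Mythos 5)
Your first-stage greedy extraction (pick a point where the weight exceeds $\delta$, remove the tubes through it, note that each step removes $c$-mass at least $\delta$, hence at most $O(\delta^{-1})$ steps) matches the paper's opening move, but the argument then rests on an overlap claim that is false, and this is a genuine gap. You assert that if two separated tubes both contain a point $(t,x)$ with $|t|\le 2^k$, then $|\omega_\beta-\omega_{\beta'}|\lesssim 2^{-k}$ and $|x_\beta-x_{\beta'}|\lesssim 1$, so that the tubes through a fixed point occupy an $O(1)\times O(2^{-k})$ box in $(x_\beta,\omega_\beta)$-space, i.e.\ are essentially $O(1)$ of the $T_\beta$. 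The inequality $|(x_\beta-x_{\beta'})+(\omega_\beta-\omega_{\beta'})t|\le 2$ gives no such conclusion, because $x_\beta-x_{\beta'}$ can cancel $(\omega_\beta-\omega_{\beta'})t$; the separation hypothesis is a \emph{lower} bound and cannot be used to deduce upper bounds on the parameter differences. Concretely, fix $t_*=2^{k-1}$, take directions $\omega_j$ spaced $\sim 2^{-k}$ apart across the whole angular range, and set $x_{\beta_j}:=x_0-\omega_j t_*$: these are $\sim 2^k$ pairwise separated tubes (since $2^k|\omega_i-\omega_j|\gtrsim 1$) all passing through the single point $(t_*,x_0)$, with angular spread $O(1)$, not $O(2^{-k})$. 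This ``bush'' of transversally oriented tubes through a common point is exactly the configuration the lemma must handle, and your proposal's later accounting (the $(a)/(b)$ dichotomy, the cell decomposition with a one-dimensional covering in $t$) inherits the same false premise, so the $\delta^{-3}$ count is never actually established.

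What is missing is a second stage that covers the contribution of each removed bush $B_j$ (removal from the family does not remove it from the sum you must bound). The paper does this by an angular analysis around the common point $(t_j,x_j)$: decompose $S^{n-1}$ dyadically, call a cap \emph{large} if the $c$-weight of bush tubes with direction in it exceeds a small multiple of $\delta^2$, so there are at most $O(\delta^{-2})$ minimal large caps, and assign to each a fat $C\times C2^k$ tube through $(t_j,x_j)$. The key geometric point is that if $(t,x)$ lies at distance $|t-t_j|\gg 1$ from the bush centre, every bush tube through $(t,x)$ has direction within $O(1/|t-t_j|)$ of $(x-x_j)/(t-t_j)$; hence if the bush weight at $(t,x)$ exceeded $\delta^2/4$, a cap of radius $\sim 1/|t-t_j|$ would be large, forcing $(t,x)$ into one of the fat tubes. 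With $O(\delta^{-1})$ bushes and $O(\delta^{-2})$ tubes per bush one gets the stated $O(\delta^{-3})$; the bound $\delta/2$ from the residual tubes plus $\delta/2$ from the bushes (triangle inequality over the $\le 2/\delta$ bushes, each contributing $\le\delta^2/4$ off its exceptional tubes) gives the conclusion. Without some substitute for this angular/bush argument, your proposal does not close.
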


\begin{proof}  We perform the following greedy algorithm to build some points $(t_1,x_1), (t_2,x_2), \ldots, (t_k,x_k)$ and disjoint sets $B_1, \ldots, B_k \subset B$.
\begin{itemize}
\item (Step 0) Initialise $k=0$.
\item (Step 1) If one has
\begin{equation}\label{sunbeam}
\sum_{\beta \in B \backslash (B_1 \cup\ldots \cup B_{k})} c_\beta 1_{T_\beta}(t,x) \leq \delta/2
\end{equation}
for all $(t,x)$, then {\tt STOP}. 
\item (Step 2) Otherwise, we can find a $(t_{k+1},x_{k+1})$ such that \eqref{sunbeam} fails.  Set
$$B_{k+1} := \{ \beta \in B \backslash (B_1 \cup\ldots \cup B_{k}): (t_{k+1},x_{k+1}) \in T_\beta \},$$
increment $k$ to $k+1$, and return to Step 1.
\end{itemize}

Observe that each time Step 2 is invoked, the quantity $\sum_{\beta \in B_1 \cup \ldots \cup B_k} c_\beta$ increases by at least $\delta/2$.  Thus the algorithm must terminate in at most $2/\delta$ steps.  By construction, \eqref{sunbeam} holds for all $(t,x)$, so it suffices to show that
$$\sum_{\beta \in B_1 \cup \ldots \cup B_k} c_\beta 1_{T_\beta}(t,x) \leq \delta/2$$
for all $(t,x)$ outside  of a union of at most $O(\delta^{-3})$ $1 \times 2^k$ tubes.  By the triangle inequality, it suffices to show that for each $1 \leq j \leq k$, one has
\begin{equation}\label{bjj}
\sum_{\beta \in B_j} c_\beta 1_{T_\beta}(t,x) \leq \delta^2/4
\end{equation}
for all $(t,x)$ outside of a union of at most $O(\delta^{-2})$ $1 \times 2^k$ tubes.

Fix $j$.  The tubes $T_\beta$ for $\beta$ in $B_j$ all pass through a single point $(t_j,x_j)$, and so are almost entirely determined by the direction $\omega_\beta \in S^{n-1}$ of those tubes.

Partition $S^{n-1}$ into a dyadic grid (e.g. by first breaking up $S^{n-1}$ into two hemispheres, and identifying those hemispheres with a dyadic cube by some bilipschitz map).  Call a square $q$ in that grid \emph{large} if $\sum_{\beta \in B_j: \omega_\beta \in q} c_\beta$ exceeds $\delta'$, where $\delta' > 0$ is a small parameter to be chosen later.  Consider the large squares which are minimal with respect to set inclusion.  Then (by the dyadic nature of the grid) these squares are disjoint; since $\sum_{\beta \in B} c_\beta \leq 1$, there are at most $1/\delta'$ such minimal large squares.

For each minimal large square $q$, let $T_q$ be the $C \times C 2^k$-tube centred at $(t_j,x_j)$ and oriented in the direction $(1,\omega_q)$, where $\omega_q$ is the centre of $q$ and $C$ is a sufficiently large constant.  We claim that (with $\delta$ equal to a sufficiently small multiple of $\delta^2$), \eqref{bjj} holds for all $(t,x)$ outside of all the $T_q$, and also obeying $|t-t_j|+|x-x_j| \geq C$; this will establish the lemma, since we can cover each $T_q$ by $O(1)$ $1 \times 2^k$ tubes.

To verify the claim, suppose that $(t,x)$ is such that $|t-t_j|+|x-x_j| \geq C$ for some large $C$ but obeys
$$ \sum_{\beta \in B_j} c_\beta 1_{T_\beta}(t,x) > \delta^2/4.$$
We may assume that $|t-t_j| \geq 10$, as the sum on the left must be empty otherwise.
Write $\omega := (x-x_j)/(t-t_j)$.  Observe from elementary geometry that in order for $T_\beta$ to contain $(t,x)$, $\omega_\beta$ must lie within $O( 1 / |t-t_j| )$ of $\omega$, thus
$$ \sum_{\beta \in B_j: |\omega_\beta - \omega| \lesssim 1/|t-t_j|} c_\beta \gtrsim \delta^2.$$
By the pigeonhole principle (and taking $\delta'$ equal to a small multiple of $\delta^2$), this implies the existence of a large square $q$ of sidelength $\sim 1/|t-t_j|$ and within a distance $O(1/|t-t_j|)$ of $\omega$.  This large square must contain a minimal large square $q'$, and elementary geometry then shows that $(t,x)$ lies in $T_q$ if $C$ is large enough, and the claim follows.
\end{proof}

As a consequence, we have

\begin{proposition}[Exceptional tubes]\label{except}  Let $\psi$ be a non-trivial blue wave of frequency $2^k$ for some $k \geq 0$, and let $\delta > 0$.  Then there exists $O(\delta^{-O(1)})$ $1 \times 2^k$ tubes, such that
$$ \| \psi \|_{L^2(q)} \leq \delta \M(\psi)^{1/2}$$
for any cube $q$ of unit sidelength not touching one of these tubes.
\end{proposition}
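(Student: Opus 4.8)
Normalise $\M(\psi)=1$. The plan is to decompose $\psi$ into pieces whose Fourier support is a $2^{-k}$-cap of the cone, to analyse each piece after rescaling it to frequency $\sim 1$ (where the subcritical Strichartz estimate holds with a $k$-independent constant), and to reassemble the pieces using Lemma~\ref{cover}. Thus, first write $\psi=\sum_\theta\psi_\theta$, where $\theta$ runs over a finitely overlapping cover of the directions with $\angle(\cdot,e_1)\le\tfrac\pi8$ by caps of aperture $\sim2^{-k}$ (so there are $\sim2^{k(n-1)}$ of them) and $\psi_\theta$ is the blue wave obtained by restricting $\tilde\psi$ to the corresponding sector of $\Sigma_\blue\cap\Sigma_k$; then $\sum_\theta\M(\psi_\theta)\lesssim1$ by Plancherel. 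The elementary but essential point is that at unit scale these pieces are almost orthogonal: at any fixed time the spatial Fourier supports of the $\psi_\theta$ are $\gtrsim1$-separated (aperture $2^{-k}$ at radius $2^k$) and are smeared only by $O(1)$ on restriction to a unit ball, so --- after splitting into $O(1)$ subfamilies of genuinely separated caps --- one has $\|\psi\|_{L^2(q)}^2\lesssim\sum_\theta\|\psi_\theta\|_{L^2(q)}^2$ for every unit cube $q$.

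Fix a cap $\theta$ with axis direction $\omega_\theta$. Over any time interval of length $2^k$ the wave $\psi_\theta$ essentially does not disperse --- in the frame moving with velocity $\omega_\theta$ it solves a Schr\"odinger-type equation run for effective time $O(1)$ --- so for any $1\times2^k$ tube $T$ oriented along $(1,\omega_\theta)$ and any unit cube $q\subset T$ one has $\|\psi_\theta\|_{L^2(q)}^2\lesssim2^{-k}\|\psi_\theta\|_{L^2(T)}^2$ modulo rapidly decaying tails. It therefore suffices to control the $1\times2^k$ tubes $T$ (in direction $\omega_\theta$) with $\|\psi_\theta\|_{L^2(T)}^2\ge c\delta^2\,2^k\,\M(\psi_\theta)$. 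Applying the standard parabolic rescaling, which carries $\psi_\theta$ to a frequency-$1$ blue wave of $O(1)$-aperture support and carries $1\times2^k$ tubes to unit cubes (cf.\ \cite{tv:cone2}, \cite{tao:cone}), reduces this to the statement that a frequency-$1$ blue wave $\varphi$ has at most $O(\delta^{-O(1)})$ unit cubes $q$ with $\|\varphi\|_{L^2(q)}^2\gtrsim\delta^2\M(\varphi)$; and that follows by tiling spacetime into unit cubes and using the $k$-independent subcritical Strichartz bound $\|\varphi\|_{L^{p_0}_{t,x}(\R\times\R^n)}\lesssim\M(\varphi)^{1/2}$ with $p_0=\tfrac{2(n+1)}{n-1}>2$ (see \cite{strichartz:restrictionquadratic}), since $\|\varphi\|_{L^2(q)}\le\|\varphi\|_{L^{p_0}(q)}$ and $\sum_q\|\varphi\|_{L^{p_0}(q)}^{p_0}\lesssim\M(\varphi)^{p_0/2}$. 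Undoing the rescaling gives, for each $\theta$, a family $\mathcal T_\theta$ of $O(\delta^{-O(1)})$ $1\times2^k$ tubes in direction $\omega_\theta$ such that $\|\psi_\theta\|_{L^2(q)}\le(\delta/C)\M(\psi_\theta)^{1/2}$ for every unit cube $q$ not meeting $\bigcup\mathcal T_\theta$.

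Now assemble. If $q$ is a unit cube with $\|\psi\|_{L^2(q)}\ge\delta$, then the almost orthogonality above, the trivial bound $\|\psi_\theta\|_{L^2(q)}^2\le\M(\psi_\theta)$, and the previous step (with $C$ chosen large) force $\sum_{\theta:\,q\text{ meets }\mathcal T_\theta}\M(\psi_\theta)\gtrsim\delta^2$. Set $c_T:=\M(\psi_{\theta(T)})$ for $T\in\bigcup_\theta\mathcal T_\theta$ (with $\theta(T)$ the cap containing the axis of $T$), and let $\widehat T$ be an $O(1)$-dilate of $T$, large enough to absorb the tails; then the previous sentence reads $\sum_Tc_T\,1_{\widehat T}(t,x)\gtrsim\delta^2$ at every such $(t,x)$, while $\sum_Tc_T=\sum_\theta(\#\mathcal T_\theta)\,\M(\psi_\theta)\lesssim\delta^{-O(1)}$. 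After rescaling the weights to have total mass $\le1$ and covering each $\widehat T$ by $O(1)$ honest $1\times2^k$ tubes, Lemma~\ref{cover} --- in the routine variant that permits the tubes to be centred at arbitrary times rather than at $t=0$, obtained by sorting them into dyadic time slabs (only $O(\delta^{-O(1)})$ of which are relevant, since the active tubes occupy a time range $\lesssim2^k\delta^{-O(1)}$) --- produces $O(\delta^{-O(1)})$ $1\times2^k$ tubes off which $\sum_Tc_T\,1_{\widehat T}<\delta^2$. By the contrapositive, every unit cube avoiding these tubes satisfies $\|\psi\|_{L^2(q)}\le\delta=\delta\M(\psi)^{1/2}$, as desired.

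The hard part is this final assembly step. Taking $\bigcup_\theta\mathcal T_\theta$ directly would give $\sim2^{k(n-1)}\delta^{-O(1)}$ tubes, which is useless; Lemma~\ref{cover} rescues matters because the (many) tubes produced by the various caps that participate in a single ``near-focusing'' of $\psi$ all overlap in one small spacetime region, and it is precisely this geometric coincidence that the covering lemma converts into a bounded number of exceptional tubes. Points requiring care are: (i) the Schwartz tails of the $\psi_\theta$, handled by the dilation $\widehat T$ (a $\log(1/\delta)$-fold dilation if explicit tail bounds are wanted); (ii) the parabolic rescaling sends unit cubes to skewed parallelepipeds, so one must check that ``$q$ avoids $\bigcup\mathcal T_\theta$'' really does make $\|\psi_\theta\|_{L^2(q)}$ small; and (iii) removing the $t_\beta=0$ normalisation of Lemma~\ref{cover} via the time-slab device. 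Finally, note that applying the subcritical Strichartz estimate --- or Proposition~\ref{bilp2} --- directly to $\psi$, or to a single $\psi_\theta$ without first rescaling, costs a power of $2^k$; it is the per-cap reduction to frequency $1$ that makes the whole estimate $k$-independent.
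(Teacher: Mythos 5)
There is a genuine gap at the heart of your second paragraph: the claim that the ``standard parabolic rescaling'' carries a $2^{-k}$-aperture piece $\psi_\theta$ to a frequency-$1$ blue wave \emph{and} carries $1 \times 2^k$ tubes along $(1,\omega_\theta)$ to unit cubes. The only (approximately) cone-preserving renormalisation taking the sector $\{(-|\xi|,\xi): |\xi|\sim 2^k,\ \angle(\xi,\omega_\theta)\lesssim 2^{-k}\}$ to an $O(1)$ cap at frequency $\sim 1$ is essentially a Lorentz boost of rapidity $\sim k\log 2$ (any decomposition into dilation plus parabolic rescaling composes to exactly this boost). In null coordinates $\underline{v}=t+x_1$, $\underline{u}=t-x_1$ (taking $\omega_\theta=e_1$), that boost acts by $\underline{v}\mapsto 2^{-k}\underline{v}$, $\underline{u}\mapsto 2^{k}\underline{u}$, so a $1\times 2^k$ tube along $(1,e_1)$ — which is a $2^k\times 1\times 1$ box in $(\underline v,\underline u,x')$ — is mapped to a $1\times 2^k\times 1$ box, i.e.\ to another $1\times 2^k$ tube, now oriented along the reflected null direction $(1,-e_1)$; it is \emph{not} a unit cube. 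Consequently your reduction to ``count unit cubes $q$ with $\|\varphi\|_{L^2(q)}^2\gtrsim \delta^2\M(\varphi)$ via tiling and the $L^{p_0}$ Strichartz bound'' does not prove what you need: after the correct rescaling you must count \emph{heavy transverse $1\times 2^k$ tubes} of a frequency-$1$ wave, and a tube can be heavy while containing no heavy unit cube, with mass $\sim 2^{-k}\delta^2\M(\psi_\theta)$ deposited in each of its $\sim 2^k$ unit cubes. Indeed this is exactly the configuration described in the paper's introduction (a blue wave built from $\sim 2^k$ time-staggered packets concentrating along a single tube), so the obstruction is not hypothetical; counting such tubes globally in time genuinely requires a dispersive ingredient and cannot follow from a unit-cube tiling of a subcritical Strichartz norm.

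This missing dispersive input is precisely what the paper's proof supplies before any cap decomposition: a $TT^*$ argument using the stationary-phase bound $|\langle U(-t'_i)f_i,U(-t'_j)f_j\rangle|\lesssim (T/2^k)^{-(n-1)/2}$ for the \emph{full} frequency-$2^k$ wave $\psi$, which confines all bad unit cubes to $O(\delta^{-O(1)})$ time slabs of duration $O(\delta^{-O(1)}2^k)$; only then does it decompose $\psi$ into $O(1)$-sized frequency caps inside a single slab $[0,2^k]\times\R^n$, where the transport bound on the kernel $K_{\alpha,t_0}$ gives the weights fed into Lemma \ref{cover}. Your proposal silently relies on this fact as well: the parenthetical assertion that ``the active tubes occupy a time range $\lesssim 2^k\delta^{-O(1)}$'' is exactly the conclusion of the paper's $TT^*$ step and is nowhere justified in your argument (and at the level of per-cap heavy tubes it is simply false, since different caps can focus at widely separated times). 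Your overall architecture — sector decomposition, per-cap tube families, assembly through Lemma \ref{cover} with weights $\M(\psi_\theta)$ — is reasonable and close in spirit to the paper's assembly, but to repair the proof you must either restore a $TT^*$/dispersive step for $\psi$ (as in the paper) or prove directly a per-cap, global-in-time bound on the number of heavy $1\times 2^k$ tubes, which is a statement of essentially the same depth as the one being proved.
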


\begin{proof}  By partitioning $\psi$ into a bounded number of pieces and applying some slight Lorentz transforms, we may assume that $\psi$ has margin at least $1/10$ (say).  We may normalise $\M(\psi)=1$.  Call a cube $q$ of unit sidelength \emph{bad} if
$$ \| \psi \|_{L^2(q)} > \delta.$$

We first perform a $TT^*$ analysis, analogous to that used to prove Strichartz estimates.
Suppose we can find $N$ bad cubes $q_1,\ldots,q_N$, whose centres $(t_1,x_1),\ldots,(t_N,x_N)$ have the separation property $|t_i-t_j| \geq T$ for all distinct $1 \leq i,j \leq N$ and some $T \gg 2^k$.  By the pigeonhole principle and duality, for each $1 \leq j \leq N$ we can find a time $t'_j = t_j + O(1)$ and a function $f_j$ of $L^2_x$ norm $1$ supported in the region $\{ x: |x-x_j| \lesssim 1 \}$ such that
$$ \hbox{Re} \langle \psi(t'_j), f_j \rangle_{L^2_x} \gtrsim \delta.$$
By Fourier analysis, one can write the left-hand side as
$$ \hbox{Re} \langle \psi(0), U(-t'_j) f_j \rangle_{L^2_x} \gtrsim \delta.$$
where  $U(-t)$ is a Fourier multiplier with symbol $\eta(\xi/2^k) e^{2\pi it|\xi|}$, where $\eta$ is a smooth Littlewood-Paley-type cutoff to the annulus $\{ \xi \in \R^n: |\xi| \sim 1 \}$.  Summing in $j$ and then using Cauchy-Schwarz, we conclude that
$$ \| \sum_{j=1}^N U(-t'_j) f_j \|_{L^2_x(\R^n)} \gtrsim N \delta.$$
Squaring this, we see that
\begin{equation}\label{nij}
 \sum_{1 \leq i,j \leq N} |\langle U(-t'_i) f_i, U(-t'_j) f_j \rangle_{L^2_x}| \gtrsim N^2 \delta^2.
\end{equation}
On the other hand, from Plancherel's theorem we have
$$ |\langle U(-t'_j) f_j, U(-t'_j) f_j \rangle_{L^2_x}| \lesssim 1$$
and when $i \neq j$, the decay of the convolution kernel of $U(-t'_i)^* U(-t'_j)$ (which can be easily computed using stationary phase) and the localisation of the $f_i, f_j$ give the dispersive bound
$$ |\langle U(-t'_i) f_i, U(-t'_j) f_j \rangle_{L^2_x}| \lesssim (T/2^k)^{-(n-1)/2}.$$
We thus conclude that
$$ \sum_{1 \leq i,j \leq N} |\langle U(-t'_i) f_i, U(-t'_j) f_j \rangle_{L^2_x}| \lesssim N + (T/2^k)^{-(n-1)/2} N^2.$$
This will contradict \eqref{nij} if $N > C\delta^{-C}$ and $T > C \delta^{-C} 2^k$ for some sufficiently large $C$.  Thus we see that we cannot find more than $O( \delta^{-O(1)})$ bad cubes whose centres have time coordinates separated by more than $C \delta^{-C} 2^k$.  Applying a greedy algorithm, this implies that we can cover the union of all the bad cubes by at most $O(\delta^{-O(1)})$ slabs $I \times \R^n$ of duration $O( \delta^{-O(1)} 2^k )$.  By subdividing these slabs further we may assume that each time interval $I$ has length at most $2^k$.  It thus suffices to show that the bad cubes in any time interval $I$ of length $2^k$ can be covered by $O( \delta^{-O(1)} )$ $1 \times 2^k$ tubes.  By time translation we may take $I = [0,2^k]$.

The Fourier transform of $\psi$ is supported on $\Sigma_{\blue} \cap \Sigma_k$.  We cover this set by about $O(2^{nk})$ subsets $\alpha$ of the form $\alpha = \{ (-|\xi|, \xi): |\xi - \xi_\alpha| \lesssim 1 \}$, where $|\xi_\alpha| \sim 2^k$ and $\angle \xi_0, e_1 \leq \pi/8$.  Using a partition of unity, we can then write $\psi = \sum_\alpha \psi_\alpha$, where each $\psi_\alpha$ is a blue wave with Fourier support on $\alpha$, and 
$$ \sum_\alpha \M(\psi_\alpha) \lesssim 1.$$

Now let $\eta$ be a Schwartz function, positive on the ball $\{ x: |x| \lesssim 1 \}$, whose Fourier transform is supported in a ball $\{ \xi: |\xi| \lesssim 1 \}$.  Then for any unit cube $q$ with centre $(t_0,x_0)$, we can estimate
$$ \| \psi \|_{L^2_{t,x}(q)}^2 \lesssim \int_{t = t_0+O(1)} \int_{\R^d} |\psi(t,x)|^2 \eta(x-x_0)^2\ dx dt.$$
If we expand $\psi = \sum_\alpha \psi_\alpha$, we can write this as
$$ \| \psi \|_{L^2_{t,x}(q)}^2 \lesssim \sum_{\alpha,\beta} \int_{t = t_0+O(1)} \langle \eta(\cdot-x_0) \psi_\alpha(t), \eta(\cdot-x_0) \psi_\beta(t) \rangle_{L^2_x}\ dt.$$
Observe from Fourier analysis that the inner product vanishes unless $\alpha, \beta$ are within $O(1)$ of each other.  Discarding all the vanishing terms, and applying Cauchy-Schwarz followed by Schur's test, we conclude that
$$ \| \psi \|_{L^2_{t,x}(q)}^2 \lesssim \sum_{\alpha} \int_{t = t_0+O(1)} \| \eta(\cdot-x_0) \psi_\alpha(t)\|_{L^2_x(\R^n)}^2.$$
Let $F_\alpha(t_0,x_0)$ denote the quantity
$$ F_\alpha(t_0,x_0) := \| \eta(\cdot-x_0) \psi_\alpha(t_0)\|_{L^2_x(\R^n)}^2,$$
thus it suffices to establish the pointwise estimate
$$ \sum_{\alpha} F_\alpha(t_0,x_0) \leq c \delta^2$$
for all $(t_0,x_0) \in [0,2^k] \times \R^n$ outside of the union of at most $O(\delta^{-O(1)})$ $1 \times 2^k$ tubes, where $c > 0$ is a sufficiently small constant.

Let us first understand the situation for a fixed $\alpha$.  From the Fourier support of $\psi_\alpha(t_0)$, and the fact that $t_0 = O(2^k)$, we can express $\psi_\alpha(t_0)$ as the convolution of $\psi_\alpha(0)$ with a kernel $K_{\alpha,t_0}(x)$ which is bounded pointwise by 
$$ |K_{\alpha,t_0}(x)| \lesssim (1 + |x - \omega_\alpha t_0|)^{-100n},$$
where $\omega_\alpha := \xi_\alpha/|\xi_\alpha|$ is the direction of the centre of $\alpha$.  From this we see that
$$ F_\alpha(t_0,x_0) \lesssim \int_{\R^n} |\psi_\alpha(t_0,x)|^2 (1 + |x-x_0-\omega_\alpha t_0|)^{-50n}\ dx.$$
If we then cover $[0,2^k] \times \R^n$ by boundedly overlapping $1 \times 2^k$ tubes $(T_\beta)_{\beta \in B_\alpha}$ with $t_0=0$ and oriented in the direction $(1,\omega_\alpha)$, and separated in the sense of Lemma \ref{cover}, we see that we have the pointwise bound
$$ F_\alpha \leq \sum_{\beta \in B_\alpha} c_\beta 1_{T_\beta}$$
for some constants $c_\beta \geq 0$ obeying the bound
$$ \sum_{\beta \in B_\alpha} c_\beta  \lesssim \M(\psi_\alpha).$$
Summing in $\alpha$, we obtain 
$$ \sum_{\alpha \in F_\alpha} \leq \sum_{\beta \in B} c_\beta 1_{T_\beta}$$
for some collection of $1 \times 2^k$ tubes $(T_\beta)_{\beta \in B}$ with $t_0=0$ and separated in the sense of Lemma \ref{cover}, and with
$$ \sum_{\beta \in B} c_\beta  \lesssim 1.$$
The claim now follows from Lemma \ref{cover}.
\end{proof}

Combining Proposition \ref{local} with Proposition \ref{except} we obtain

\begin{corollary}[Exceptional tubes, II]\label{local2}  Let $0 < \delta < 1$, let $\phi$ be a red wave of frequency $1$, and let $\psi$ be a nonblue wave of frequency $2^k$ for some $k \geq 0$.  Then there exists a collection $(T_\beta)_{\beta \in B}$ of $1 \times 2^k$ tubes with cardinality at most $O(\delta^{-O(1)})$ such that
$$ \| \phi \psi \|_{L^2((\R \times \R^d) \backslash \bigcup_{\beta \in B} T_\beta)} \lesssim \delta \M(\phi)^{1/2} \M(\psi)^{1/2}.$$
\end{corollary}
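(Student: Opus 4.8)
The plan is to read Proposition \ref{local} as (essentially) the contrapositive of the assertion we want, and to supply it with the smallness information coming from Proposition \ref{except}. First I would dispose of the degenerate cases: if $\phi$ or $\psi$ is trivial there is nothing to prove (take $B=\emptyset$), so assume both are non-trivial and rescale so that $\M(\phi)=\M(\psi)=1$. Fix absolute constants $C_0\geq 1$ and $0<c_0\leq 1$ so that Proposition \ref{local} takes the form: for any cube $Q$ (of any sidelength $R>0$) and any $\Omega\subset\R\times\R^n$, if $\|\phi\psi\|_{L^2(Q\cap\Omega)}\geq\delta$ then there is a unit cube $q$ with $\|\psi\|_{L^2(q\cap\Omega)}\geq c_0\delta^{C_0}$.

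Next I would apply Proposition \ref{except} with parameter $\delta':=\frac{1}{2}c_0\delta^{C_0}$, which is a fixed power of $\delta$; this yields a collection of $O((\delta')^{-O(1)})=O(\delta^{-O(1)})$ $1\times 2^k$ tubes such that every unit cube $q$ disjoint from all of them obeys $\|\psi\|_{L^2(q)}\leq\delta'$. I would then replace each of these tubes by the concentric $C\times C2^k$ tube, with $C$ a large absolute constant (depending only on $n$), and re-cover each such fat tube by $O(1)$ ordinary $1\times 2^k$ tubes (this is routine, and is done already in the proof of Lemma \ref{cover}). Calling the resulting family $(T_\beta)_{\beta\in B}$, we still have $|B|=O(\delta^{-O(1)})$; set $\Omega:=(\R\times\R^n)\setminus\bigcup_{\beta\in B}T_\beta$. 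The purpose of the dilation is that any unit cube $q$ meeting $\Omega$ must (by the diameter bound $\operatorname{diam}(q)=\sqrt{n+1}\ll C$) be disjoint from every one of the original Proposition \ref{except} tubes, hence unexceptional; consequently $\|\psi\|_{L^2(q\cap\Omega)}\leq\delta'$ for \emph{every} unit cube $q$ (trivially when $q\cap\Omega=\emptyset$).

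Finally I would contrapose Proposition \ref{local}: for any cube $Q$ of any sidelength $R>0$, were $\|\phi\psi\|_{L^2(Q\cap\Omega)}\geq\delta$ we would obtain a unit cube $q$ with $\|\psi\|_{L^2(q\cap\Omega)}\geq c_0\delta^{C_0}>\delta'$, contradicting the previous paragraph; hence $\|\phi\psi\|_{L^2(Q\cap\Omega)}<\delta$ for all such $Q$. Letting $Q$ exhaust $\R\times\R^n$ and applying monotone convergence (the integrand being non-negative, and $\phi\psi\in L^2$ by \eqref{l2-noloc}) gives $\|\phi\psi\|_{L^2(\Omega)}\leq\delta=\delta\,\M(\phi)^{1/2}\M(\psi)^{1/2}$, which is the claim after undoing the normalisation (with constant to spare for the $\lesssim$).

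\textbf{Main obstacle.} I do not expect a real obstacle: the corollary is essentially a bookkeeping merge of Propositions \ref{local} and \ref{except}. The two points needing a little care are the dilation of the exceptional tubes — needed so that Proposition \ref{except} still controls the unit cubes that straddle the boundary of $\Omega$ — and the passage from finite cubes $Q$ to all of spacetime, which is harmless precisely because \eqref{l2-noloc} makes $\phi\psi$ globally square-integrable.
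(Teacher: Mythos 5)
Your proposal is correct and follows essentially the same route as the paper: apply Proposition \ref{except} with a small power-of-$\delta$ parameter, fatten the resulting tubes by a bounded factor so that every unit cube meeting the complement $\Omega$ inherits the smallness $\|\psi\|_{L^2(q\cap\Omega)}\leq\delta'$, and then contrapose Proposition \ref{local}. The only cosmetic difference is at the end: the paper uses an approximation argument to replace spacetime by one large cube, whereas you apply the contrapositive to every cube $Q$ (with constants uniform in the sidelength) and exhaust spacetime by monotone convergence, which works equally well.
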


\begin{proof} By an approximation argument we may assume that $\phi, \psi$ are non-trivial, smooth and rapidly decreasing in space; the frequency separation of $\phi,\psi$ then ensures that $\phi\psi$ is also rapidly decreasing in time.  This lets us replace spacetime $\R \times \R^d$ by a sufficiently large cube $Q$ (we have no upper bound on the size $R$ of this cube, but our bounds will not involve $R$).

Let $\delta' > 0$ be chosen later.  By Proposition \ref{except} we have
$$ \| \psi \|_{L^2(q)} \leq \delta' \M(\psi)^{1/2}$$
for all cubes $q$ of unit length not touching one of $O((\delta')^{-O(1)})$ $1 \times 2^k$ tubes.  By dilating each of these tubes by a bounded amount, one can thus find a set $\Omega$ which is the complement of the union of $O((\delta')^{-O(1)})$ $1 \times 2^k$ tubes such that
$$ \| \psi \|_{L^2(q \cap \Omega)} \leq \delta' \M(\psi)^{1/2}$$
for all cubes $q$ of unit length.  The claim now follows from Proposition \ref{local} if we set $\delta' = C^{-1} \delta^C$ for a sufficiently large $C$.
\end{proof}

\section{Removing the exceptional rays}

Corollary \ref{local2} is close to Theorem \ref{profile2}, but differs from it in that the tubes $T_\beta$ depend on $\psi$.  We now work to make these tubes independent of $\psi$.  It will suffice to prove the following statement:

\begin{proposition}[Exceptional rays]\label{tinny}  Let $0 < \delta < 1$, and let $\phi$ be a red wave of frequency $1$.  Then there exists a collection $(T_\beta)_{\beta \in B}$ of infinite tubes with cardinality at most $O(\delta^{-O(1)})$, such that if one lets $\Omega := (\R \times \R^n) \backslash \bigcup_{\beta \in B} T_\beta$, then
$$ \| \phi \|_{L^2_t L^\infty_x(\Omega \cap T)} \leq \delta \M(\phi)^{1/2}$$
for all infinite tubes $T$.
\end{proposition}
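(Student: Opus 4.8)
The plan is to reduce the statement to an $L^2$-on-tubes estimate, feed in the dispersive behaviour of the red wave through a $TT^\ast$ argument and Proposition \ref{except}, and then remove the exceptional rays by a covering/summation over mass scales in the spirit of Lemma \ref{cover}. As in the proofs of Proposition \ref{local} and Proposition \ref{except}, after decomposing $\phi$ into $O(1)$ pieces and applying Lorentz transformations (which carry infinite tubes to infinite tubes with bounded distortion) we may normalise $\M(\phi)=1$ and assume $\phi$ has margin at least $\tfrac1{10}$. Since $\phi$ has frequency $1$, Bernstein's inequality gives $\|\phi(t)\|_{L^\infty_x(B(x,1))}^2 \lesssim \int |\phi(t,y)|^2 \rho(y-x)\,dy$ for a fixed Schwartz weight $\rho$, hence $\|\phi\|_{L^2_t L^\infty_x(T)} \lesssim \|\phi\|_{L^2(CT)}$ for a bounded dilate of any tube $T$; so it suffices to find $O(\delta^{-O(1)})$ infinite tubes whose complement $\Omega$ satisfies $\|\phi\|_{L^2(T\cap\Omega)} \le \delta$ for every infinite tube $T$.

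I would use two inputs. First, the $TT^\ast$ argument already invoked after Theorem \ref{profile2} in the proof of Corollary \ref{fungi-2} — using that a red wave propagates transversally to the null direction $(1,\omega)$ of the tubes, together with the dispersive decay of the associated kernel (the analogue of the $(T/2^k)^{-(n-1)/2}$ bound in the proof of Proposition \ref{except}) — yields not just $\|\phi\|_{L^2(T)}\lesssim1$ but the square-function strengthening $\sum_\beta \|\phi\|_{L^2(T_\beta)}^2 \lesssim 1$ for any family of pairwise separated infinite tubes. Second, a time reversal turns $\phi$ into a blue wave of frequency $2^0$, so Proposition \ref{except} applies: for each $\mu>0$ there are $O(\mu^{-O(1)})$ unit cubes outside of which $\|\phi\|_{L^2(q)}\le\mu$ for every unit cube $q$.

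For the extraction, decompose $\phi$'s mass dyadically: for $1\le j\le j_{\max}$ with $2^{-j_{\max}}\approx\delta^{C}$ (a large power of $\delta$), let $\mathcal Q_j$ be the unit cubes $q$ with $\|\phi\|_{L^2(q)}\sim 2^{-j}$, so that $\#\mathcal Q_j=O(2^{jO(1)})$ and $\sum_j\#\mathcal Q_j = O(\delta^{-O(1)})$ by the second input; through each such cube pick an infinite tube, obtaining $O(\delta^{-O(1)})$ exceptional tubes with complement $\Omega$. For an infinite tube $T$, the unit cubes $q$ tiling $T$ that survive in $\Omega$ meet none of the exceptional tubes, hence have $\|\phi\|_{L^2(q)}\lesssim\delta^{C}$, and it remains to show $\|\phi\|_{L^2(T\cap\Omega)}^2 = \sum_{q\subset T\cap\Omega}\|\phi\|_{L^2(q)}^2 \le \delta^2$ — that is, that the mass of $\phi$ along $T$ in the low-density region is controlled by the local density there. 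This is the crux, and it is exactly here that the transversality of the red wave must be used essentially: one must show that the density $d(t):=\|\phi(t)\|_{L^2_x(B(x_0+\omega t,1))}^2$ along the core of $T$ does not merely lie in $\ell^1_t$ (which is the square-function bound) but in fact decays as $t$ moves away from where $T$ crosses the effective support of $\phi$ — a consequence of the dispersive decay of the free evolution sweeping transversally across $T$ — so that $\sum_{d(t)\lesssim\delta^{C}} d(t) \lesssim \delta^2$. Making this decay quantitative and uniform over all tube directions $\omega$ and all positions of $T$ relative to $\phi$ (so that the exceptional tubes can be chosen once and for all, at total count $O(\delta^{-O(1)})$ rather than one bounded collection per mass scale), and handling the infinite length of the tubes via a dyadic decomposition into $1\times 2^m$ pieces and a covering argument modelled on Lemma \ref{cover} and Proposition \ref{except}, is where I expect the main technical difficulty to lie.
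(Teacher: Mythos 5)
There is a genuine gap, and in fact two of your inputs fail. First, the ``square-function strengthening'' $\sum_\beta \|\phi\|_{L^2(T_\beta)}^2 \lesssim 1$ for \emph{every} separated family of infinite tubes is false: take a family of parallel infinite tubes whose unit cross-sections tile $\R^n$; then by mass conservation
$$ \sum_\beta \|\phi\|_{L^2(T_\beta)}^2 \gtrsim \int_\R \|\phi(t)\|_{L^2_x}^2\, dt = \infty.$$
The $TT^*$/transversality argument only gives a bound for each individual tube (or summed over the $O(\delta^{-O(1)})$ tubes of an already-fixed collection, as in the proof of Corollary \ref{fungi-2}); it cannot be squared and summed over arbitrary families, precisely because an infinite tube meets every time slice. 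Second, the step you correctly single out as the crux --- that once all unit cubes meeting $T\cap\Omega$ carry mass $\lesssim \delta^{C}$ the density along $T$ is summable to $\lesssim\delta^2$ --- is not a technical refinement of your two inputs but is false as a consequence of them, and is essentially the entire content of Proposition \ref{tinny}. The paper's own introductory example shows why: take $\phi=\sum_Q c_Q\phi_Q$ with $O(L)$ unit cubes $Q$ strung along a single tube $T$ and $c_Q=L^{-1/2}$; then every unit cube carries mass $\sim L^{-1}$ (arbitrarily small, so Proposition \ref{except} applied to $\phi$ flags \emph{no} bad cubes at this threshold), yet $\|\phi\|_{L^2_tL^\infty_x(T)}\sim 1$. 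So no selection of exceptional tubes through cubes of large local mass can detect, let alone remove, concentration that is spread thinly \emph{along} a light ray; the decay of $d(t)$ away from the ``effective support'' that you hope for simply does not hold for such configurations.

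The paper therefore proceeds quite differently, via Proposition \ref{concrem}: whenever some infinite tube $T$ has $\|\phi\|_{L^2_tL^\infty_x(T)}\geq\delta$, one uses duality to pick $f\in L^2_t$ and a trajectory $x(t)$ inside $T$, builds an explicit red wave $F$ as a superposition of wave packets along that trajectory (formula \eqref{fdef}), and shows by stationary phase and the transversality of red directions to $(1,\omega_0)$ that (a) $F$ is negligible in $L^2_tL^\infty_x$ on every infinite tube outside a $\delta^{-C}$-dilate of $T$, and (b) $\M(F)\lesssim\log\frac1\delta$, while $\langle\phi(0),F(0)\rangle\gtrsim\delta$; the cosine rule then yields the mass decrement $\M(\phi-\mu F)\leq\M(\phi)-c\delta^3$. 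Iterating $O(\delta^{-3})$ times exhausts all tube concentration, and the exceptional tubes are the dilates $\delta^{-C}T_i$, covered by $O(\delta^{-O(1)})$ infinite tubes. If you want to salvage your outline, you would need to replace the cube-counting step by some mechanism that detects concentration along tubes directly (i.e., a tube-indexed analogue of Proposition \ref{except}), which is exactly what the mass-decrement argument accomplishes.
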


Indeed, suppose we had Proposition \ref{tinny}.  Let $\delta, \phi$ be as in Theorem \ref{profile2}.  We apply Proposition \ref{tinny} with $\delta$ replaced by $\delta' := C^{-1} \delta^C$ for a sufficiently large $C$, and obtain $(T_\beta)_{\beta \in B}$ and $\Omega$ obeying the conclusions of that proposition.  Next, for any $k \geq 0$ and blue wave $\psi$ of frequency $2^k$, we apply Corollary \ref{local2} to obtain another collection $(T'_{\beta'})_{\beta' \in B'}$ of $1 \times 2^k$ tubes of cardinality at most $O(\delta^{-O(1)})$ such that
$$ \| \phi \psi \|_{L^2_{t,x}((\R \times \R^n) \backslash \bigcup_{\beta' \in B'} T'_{\beta'})} \lesssim \delta \M(\phi)^{1/2} \M(\psi)^{1/2}$$
and in particular
$$ \| \phi \psi \|_{L^2_{t,x}(\Omega \backslash \bigcup_{\beta' \in B'} T'_{\beta'})} \lesssim \delta \M(\phi)^{1/2} \M(\psi)^{1/2}.$$
On the other hand, for each $T'_{\beta'}$, we see from Proposition \ref{tinny} that
$$ \| \phi \|_{L^2_t L^\infty_x(\Omega \cap T'_{\beta'})} \lesssim \delta' \M(\phi)^{1/2}$$
and thus by \eqref{massif}
$$ \| \phi\psi \|_{L^2_t L^2_x(\Omega \cap T'_{\beta'})} \lesssim \delta' \M(\phi)^{1/2} \M(\psi)^{1/2}.$$
Summing in $\beta'$ (and choosing $\delta'$ appropriately) we obtain
$$ \| \phi \psi \|_{L^2_{t,x}(\Omega \cup \bigcup_{\beta' \in B'} T'_{\beta'})} \lesssim \delta \M(\phi)^{1/2} \M(\psi)^{1/2},$$
and so by the triangle inequality
$$ \| \phi \psi \|_{L^2_{t,x}(\Omega)} \lesssim \delta \M(\phi)^{1/2} \M(\psi)^{1/2},$$
and Theorem \ref{profile2} follows.

Thus, the only remaining task is to establish Proposition \ref{tinny}.  By the usual decomposition and Lorentz transform trick, we may assume that $\phi$ has margin at least $1/10$.  We can also assume that $\delta > 0$ is small.  The key proposition is

\begin{proposition}[Concentration implies mass removal]\label{concrem}  Let $0 < \delta \ll 1$ be small.  Suppose that $\phi$ is a red wave of frequency $1$ and mass $\M(\phi) \leq 1$ and margin $\margin(\phi)$ at least $1/20$, and $T$ is an infinite tube such that
$$ \| \phi \|_{L^2_t L^\infty(T)} \geq \delta.$$
Then one can write $\phi = F + (\phi-F)$, where $F$ is a red wave of frequency $1$ and margin $\margin(F) \geq \margin(\phi) - \delta^{10}$ such that
$$ \| F \|_{L^2_t L^\infty(T' \backslash \delta^{-C} T)} \leq \delta^{5}$$
for all infinite tubes $T'$ and some absolute constant $C$, where $\delta^{-C} T$ is the dilation of $T$ by $\delta^{-C}$.  Furthermore, we have the mass decrement property
$$ \M(\phi - F) \leq \M(\phi) - c \delta^3$$
for some absolute constant $c>0$.
\end{proposition}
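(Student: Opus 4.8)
The plan is to take for $F$ the orthogonal projection of $\phi$ onto the subspace of red frequency‑$1$ waves that are ``coherent with the core null line of $T$'', and to read all three conclusions off that description. First I would reduce the hypothesis to a one‑dimensional statement. Write the core line of $T$ as $\ell=\{(t,x_0+\omega t):t\in\R\}$ (after a harmless time translation we may take $t_0=0$). Since $\phi(t,\cdot)$ has frequency $\sim1$ one may compare $L^\infty$ and $L^2$ on unit balls and sample the spatial sup on a fixed lattice; pigeonholing over the $O(1)$ lattice points in the cross‑section of $T$ produces a line $\ell'\subset T$ parallel to $(1,\omega)$ with $\|\phi|_{\ell'}\|_{L^2(dt)}\gtrsim\delta$, and after relabelling we may assume $\ell'=\ell$. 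The linear estimate I will use repeatedly is that restriction to such a line is bounded, $\|\phi|_\ell\|_{L^2(dt)}\lesssim\M(\phi)^{1/2}$; this follows from Plancherel in $t$, Cauchy--Schwarz on the level sets $\{\omega\cdot\xi+|\xi|=\tau\}$, and the coarea formula, using that because $\phi$ is \emph{red} and $\omega$ is a \emph{red} direction the phase $\xi\mapsto\omega\cdot\xi+|\xi|$ is non‑degenerate on $\supp\tilde\phi$: its value is $\gtrsim1$ and its gradient $\omega+\xi/|\xi|$ has length $\gtrsim1$ (both because the relevant angles with $e_1$ are at most $\pi/8$), so the level sets are compact hypersurfaces on which $|\nabla(\omega\cdot\xi+|\xi|)|\sim1$.

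For the construction, let $\chi$ be a smooth Fourier cutoff, equal to $1$ on $\{\margin\ge\margin(\phi)\}$ and supported in $\{\margin\ge\margin(\phi)-\delta^{10}\}$ (within the red, frequency‑$1$ region), and let $R$ be the operator $\phi\mapsto(\chi(D)\phi)|_\ell$. Since $\tilde\phi$ is supported where $\chi=1$ we have $R\phi=\phi|_\ell$, hence $\langle R^*R\phi,\phi\rangle=\|\phi|_\ell\|_{L^2(dt)}^2\gtrsim\delta^2$, while $\|R\|\lesssim1$ by the estimate above. Put $F:=P\phi$, where $P$ is the orthogonal projection onto $(\ker R)^\perp=\overline{\operatorname{range}R^*}$. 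Then $R\phi=RF$, so $\delta^2\lesssim\langle R^*R\phi,\phi\rangle=\langle R^*RF,F\rangle\le\|R\|^2\M(F)$, giving $\M(F)\gtrsim\delta^2\ge\delta^3$; the margin bound $\margin(F)\ge\margin(\phi)-\delta^{10}$ is immediate since $\overline{\operatorname{range}R^*}$ consists of red frequency‑$1$ waves with Fourier support in $\supp\chi$; and since $P$ is an orthogonal projection, $\langle\phi,F\rangle=\M(F)$, so $\M(\phi-F)=\M(\phi)-\M(F)\le\M(\phi)-c\delta^3$. This settles the margin and mass‑decrement parts; only the concentration statement remains.

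For the concentration, the key structural fact is that every wave $\Psi$ in $\overline{\operatorname{range}R^*}$ has $f_+$‑component of the form $\chi(\xi)\,e^{-2\pi i x_0\cdot\xi}\,g(\omega\cdot\xi+|\xi|)$ — up to the fixed modulation and cutoff, a function constant along the level sets of $\omega\cdot\xi+|\xi|$. Decomposing $F$ by these level sets yields $F(t,x)=\int g(\tau)\,e^{2\pi it\tau}\,\widehat{\chi\mu_\tau}(x_0+t\omega-x)\,d\tau$, where $\mu_\tau=|\nabla(\omega\cdot\xi+|\xi|)|^{-1}\,d\mathcal H^{n-1}$ on the level set. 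As the Hessian of $|\xi|$ has rank $n-1$, these are compact hypersurfaces of non‑vanishing Gaussian curvature, so $\widehat{\chi\mu_\tau}$ is concentrated in a unit ball with stationary‑phase decay $(1+|\cdot|)^{-(n-1)/2}$; thus $F$ lives essentially within $O(1)$ of $\ell\subset T$. To get the quantitative bound $\|F\|_{L^2_tL^\infty(T'\backslash\delta^{-C}T)}\le\delta^5$ for an arbitrary infinite tube $T'$, I would once more bound $L^\infty$ on a line inside $T'$ by $L^2$ on unit balls and Plancherel in $t$ along it (using non‑degeneracy of $\xi\mapsto\nu_{T'}\cdot\xi+|\xi|$ when $\nu_{T'}$ is red; a non‑red $T'$ meets a neighbourhood of $\ell$ only harmlessly). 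For $T'$ roughly parallel to $T$ this reduces to $\int|g(\tau)|^2|\widehat{\chi\mu_\tau}(z)|^2\,d\tau$ with $|z|\gtrsim\delta^{-C}$ the distance from $T'$ to $\ell$, hence $\lesssim\delta^{C(n-1)}\M(F)$; for transverse $T'$ one slices $\{\omega\cdot\xi+|\xi|=\tau\}$ further by the hyperplane section $\{\nu_{T'}\cdot\xi+|\xi|=\sigma\}$ and gains from the Fourier decay of the fixed measures on those lower‑dimensional curved slices. Taking $C=C(n)$ large enough gives $\delta^5$.

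The step I expect to be the main obstacle is this last one: making the concentration estimate uniform over \emph{all} competing tubes $T'$ with only a fixed power of $\delta$ lost. The awkward cases are transverse tubes which graze $\delta^{-C}T$ and then recede slowly, and — in the physically most important dimension $n=2$ — the fact that the slices $\{\omega\cdot\xi+|\xi|=\tau\}\cap\{\nu_{T'}\cdot\xi+|\xi|=\sigma\}$ are then $0$‑dimensional, so the decay must be produced by a non‑stationary‑phase / integration‑by‑parts argument exploiting the (limited) regularity of $g$ together with the separation $\gtrsim\delta^{-C}$ of $T'$ from $T$, keeping the time integral along $T'$ finite honestly (it is, by the red‑versus‑red Plancherel bound) rather than through the divergent pointwise estimate. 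Arranging this oscillatory‑integral bookkeeping so that every loss is a power of $\delta^{-1}$ is the delicate point.
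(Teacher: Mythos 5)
Your construction of $F$ and your proofs of the margin and mass--decrement claims are sound, and in fact the orthogonal--projection device gives $\M(\phi-F)=\M(\phi)-\M(F)\le\M(\phi)-c\delta^2$, marginally better than the paper, which instead builds $F$ explicitly by duality (as a superposition $\int f(t')\,[\hbox{red wave focused at }(t',x(t'))]\,dt'$ with $\|f\|_{L^2_t}\le 1$) and uses the cosine rule together with $\M(F)\lesssim\log\frac1\delta$.  The two objects are essentially the same: your $\overline{\operatorname{range}\,R^*}$ is exactly the class of red waves with Fourier data $\chi(\xi)e^{-2\pi ix_0\cdot\xi}g(|\xi|+\omega\cdot\xi)$, and the paper's $F$ is one such element with $g=\hat f$.  (Your reduction of the hypothesis to a single straight line is also fine in substance, though ``$O(1)$ lattice points'' is too glib: the sup over a unit cross-section is controlled by a rapidly-weighted average over nearby parallel lines, which after pigeonholing gives a line within $O(\delta^{-1})$ of $T$ -- harmless, but not a finite sampling.)

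The genuine gap is in the third conclusion, $\|F\|_{L^2_tL^\infty(T'\setminus\delta^{-C}T)}\le\delta^5$ uniformly over \emph{all} infinite tubes $T'$, which is the heart of the proposition and which you leave open.  Two concrete problems.  First, the guiding picture ``$F$ lives essentially within $O(1)$ of $\ell$'' is false: a red wave cannot concentrate near the transverse null line $\ell$; its mass disperses along the whole fan of red rays through $\ell$, i.e.\ near $\{(0,x_0)+t'(1,\omega)+s(\mp1,\pm\theta)\}$ with $\theta$ red, which for $n=2$ sweeps out an open region of spacetime, so no pointwise gain away from $\ell$ is available and the smallness must be an integrated one.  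Second, your proposed route -- Plancherel in $t$ along lines of $T'$, then slicing $\{|\xi|+\omega\cdot\xi=\tau\}$ by $\{|\xi|+\nu_{T'}\cdot\xi=\sigma\}$ -- cannot close as described: $g$ is merely an $L^2$ function (an abstract element of the closure), so there is no regularity in $\tau$ to integrate by parts against, and in the key case $n=2$ the slices are points, so there is no curvature gain either; this is exactly the obstacle you flag at the end, and it is not a bookkeeping issue.  (A smaller wrinkle: $\int|g|^2\,d\tau$ is not controlled by $\M(F)$, since the weight $w(\tau)=\int_{\{|\xi|+\omega\cdot\xi=\tau\}}\chi^2\,d\mathcal{H}^{n-1}/|\nabla|$ degenerates at the ends of the $\tau$-range, so even your parallel-tube estimate needs repair.)  The mechanism that works, and which your setup is one change of variables away from, is to undo the $\tau$-representation: write $g=\hat f$ with $f\in L^2_t$, so that $F(t,x)=\int f(t')K(t,x,t')\,dt'$ with $K$ the frequency-localised red propagator applied to a point source on $\ell$.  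Stationary phase gives $|K(t,x,t')|\lesssim\delta^{-O(1)}(1+|t-t'|)^{-(n-1)/2}(1+\dist((t,x)-(t',x_0+\omega t'),S))^{-100n}$, where $S$ is the double cone of spacetime directions along which red waves propagate; excluding $\delta^{-C}T$ forces either $|t-t'|\gtrsim\delta^{-C}$ or $\dist(\cdot,S)\gtrsim\delta^{-C}$, so one extracts a factor $\delta^{C/2}$, and a Schur test in $(t,t')$ -- with bounded row and column sums precisely because $S$ is transverse to $(1,\omega)$ and to the direction of $T'$ -- bounds the $L^2_t$ norm of $\sup_{x:(t,x)\in T'}|F(t,x)|$ by $\delta^{C/2-O(1)}\|f\|_{L^2_t}$.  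It is this combination of dispersive decay in $|t-t'|$ with transversality of the fan to every admissible tube direction, rather than smoothness of $g$ or curvature of the slices, that yields the uniform bound; without it your argument does not establish the concentration estimate.
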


Indeed, if this proposition held, then by iterating, we could decompose $\phi$ as the sum of $O(\delta^{-3})$ functions $F_1,\ldots,F_m$, each associated to an infinite tube $T_1,\ldots,T_m$ such that
$$ \| F_i \|_{L^2_t L^\infty(T' \backslash \delta^{-C} T_i)} \leq \delta^{5}$$
for all infinite tubes $T'$, plus a remainder $\phi' := \phi - F_1 - \ldots - F_M$ with the property that
$$ \| \phi' \|_{L^2_t L^\infty(T)} \leq \delta.$$
for all infinite tubes $T$.  Proposition \ref{tinny} then follows by covering each of the $\delta^{-C} T_i$ by $O(\delta^{-O(1)})$ infinite tubes, and using the triangle inequality away from these tubes.

It remains to prove Proposition \ref{concrem}.  We may as well take $T = T_{0,0,\omega,\infty}$, thus
$$ (\int_\R \sup_{|x - t \omega_0| \leq 1} |\phi(t,x)|^2\ dt)^{1/2} \geq \delta.$$
By duality, we can find a function $f \in L^2_t(\R)$ with $\|f\|_{L^2_t(\R)} \leq 1$, and a measurable function $x: \R \to \R^n$ with $|x(t)-t \omega_0| \leq 1$ for all $t$, such that
$$ \int_\R \phi(t,x(t)) \overline{f(t)}\ dt \gtrsim \delta.$$
By a limiting argument we may assume that $f$ is smooth and compactly supported, and that $x$ is smooth as well, so that there is no difficulty justifying the manipulations below.  By Fourier analysis, we can rewrite the left-hand side as
$$ \langle \phi(0), F(0) \rangle_{L^2_x}$$
where
\begin{equation}\label{fdef}
 F(t,x) = \int_\R f(t') \int_{\R^n} e^{2\pi i (t-t') |\xi|} e^{2\pi i (x-x(t')) \cdot \xi} \eta(\xi)\ d\xi dt'
\end{equation}
where $\eta = \eta_\delta$ is a bump function supported in the region of frequency space corresponding to red waves of margin $\margin(\phi)-\delta^{10}$ which equals $1$ on the region of frequency space corresponding to red waves of margin $\margin(\phi)$.  Thus $F$ is itself a red wave of margin at least $\margin(\phi)-\delta^{10}$.

Let us compute the mass $\M(F)$.  By Plancherel's theorem, this is equal to
$$ \M(F) = \int_{\R^n} \int_\R |f(t') e^{-2\pi i t' |\xi|} e^{-2\pi i x(t') \cdot \xi}\ dt'|^2 \eta(\xi)^2\ d\xi$$
which can be expanded out as
$$ \int_\R \int_\R f(t) \overline{f(t')} (\int_{\R^n} e^{2\pi i (t-t') |\xi|} e^{-2\pi i (x(t)-x(t')) \cdot \xi} \eta(\xi)^2\ d\xi)\ dt dt'.$$
Now we compute the inner integral.  When $|t-t'|=O(1)$ we can bound this integral crudely by $O(1)$, so suppose that $|t-t'|$ is large.  From the support of $\eta(\xi)$ and the fact that $x(t)$ moves (up to errors of $O(1)$) in a direction $\omega_0$ transverse to red waves, we see that the gradient of the phase has magnitude comparable to $|t-t'|$ on the support of $\eta(\xi)$.  If we integrate by parts once, we obtain a bound of $O(1/|t-t'|)$; if we integrate by parts twice, we obtain $O( 1/(\delta^{10} |t-t'|^2) )$.  Thus we obtain a net bound of
$$ O( \min( 1, 1/(|t-t'|), 1/(\delta^{10} |t-t'|^2) ) )$$
for this integrand, and so from Young's inequality (or Schur's test) and the $L^2$ normalisation of $f$, we conclude that
$$ \M(F) \lesssim \log \frac{1}{\delta}.$$
On the other hand, we have
$$ \langle \phi(0), F(0) \rangle_{L^2_x} \gtrsim \delta$$
and $\M(\phi) \leq 1$, so from the cosine rule we can find a constant $0 < \mu \leq 1$ such that
$$ \M( \phi - \mu F ) \leq \M(\phi) - c \delta^2 / \log \frac{1}{\delta}$$
for some absolute constant $c>0$.  Thus, to finish the proof of Proposition \ref{concrem} (with $F$ replaced byby $\mu F$), it will suffice to show that
$$ \| F \|_{L^2_t L^\infty(T' \backslash \delta^{-C} T)} \leq \delta^{5}$$
for all infinite tubes $T'$ and some absolute constant $C$.

We inspect the kernel
$$ K(t,x,t') := \int_{\R^n} e^{2\pi i (t-t') |\xi|} e^{2\pi i (x-x(t')) \cdot \xi} \eta(\xi)\ d\xi$$
appearing in \eqref{fdef}.  Standard stationary phase estimates show that  this kernel can be bounded in magnitude by
$$ |K(t,x,t')| \lesssim \delta^{-O(1)} (1+|t-t'|)^{-(n-1)/2} (1 + \dist( ((t-t'),(x-x(t'))), S ) )^{-100n}$$
where $S$ is the double cone
$$ S := \{ (-|x|,x): \angle x, e_1 \leq \frac{\pi}{8} \} \cup \{ (|x|,-x): \angle x, e_1 \leq \frac{\pi}{8} \}.$$
Since $x(t') = t \omega_0 + O(1)$, we can rewrite this bound as
\begin{equation}\label{kdef}
|K(t,x,t')| \lesssim \delta^{-O(1)} (1+|t-t'|)^{-(n-1)/2} (1 + \dist( (t,x) - t' (1,\omega_0), S ) )^{-100n}.
\end{equation}
Now let $T'$ be another infinite tube, which we may write as
$$ T' = \{ (t,x): |x - x_1 - \omega_1 t| \leq 1 \}.$$
By \eqref{fdef}, \eqref{kdef}, we can then bound
\begin{align*}
 \sup_{x: (t,x) \in T'} |F(t,x)| &\lesssim \delta^{-O(1)} \int_\R  (1+|t-t'|)^{-(n-1)/2} \\
&\quad (1 + \dist( (t,x_1 + \omega_1 t) - t' (1,\omega_0), S ) )^{-100n} |f(t')|\ dt'.
\end{align*}
The exclusion of $\delta^{-C} T$ forces $|x_1 + \omega_1 t - \omega_0 t| \geq \delta^{-C}/2$, which implies from elementary geometry that either $|t-t'| \geq \delta^{-C}/4$ or 
$\dist( (t,x_1 + \omega_1 t) - t' (1,\omega_0), S ) \gtrsim \delta^{-C}$.  Thus we see that
$$
 \sup_{x: (t,x) \in T'} |F(t,x)|
 \lesssim \delta^{C/2-O(1)}
\int_\R  (1 + \dist( (t,x_1 + \omega_1 t) - t' (1,\omega_0), S ) )^{-50n} |f(t')|\ dt'$$
assuming this restriction.  Thus, to conclude the proof of Proposition \ref{concrem}, it suffices by Schur's test to show that
$$ \int_\R (1 + \dist( (t,x_1 + \omega_1 t) - t' (1,\omega_0), S ) )^{-50n} \ dt' \lesssim 1$$
for all $t$, and dually that
$$ \int_\R (1 + \dist( (t,x_1 + \omega_1 t) - t' (1,\omega_0), S ) )^{-50n} \ dt \lesssim 1$$
for all $t'$.  But these estimates easily follow from the transversality of $S$ to $(1,\omega_0)$, $(1,\omega_1)$.  The proof of Proposition \ref{concrem}, and hence Theorem \ref{profile2}, follows.

\end{document}